\newtheorem{theorem}{Theorem}[section]
\newtheorem{proposition}[theorem]{Proposition}
\newtheorem{lemma}[theorem]{Lemma}
\newtheorem{remark}[theorem]{Remark}
\newtheorem{assumption}[theorem]{Assumption}
\newcommand{\cov}{\textrm{cov}}
\newcommand{\var}{\textrm{var}}
\newcommand{\eps}{\varepsilon}
\newcommand{\R}{\mathbb{R}}
\newcommand{\boldy}{\mathbf{y}}
\newcommand{\boldf}{\mathbf{f}}
\newcommand{\boldu}{\mathbf{u}}
\newcommand{\by}{\mathbf{y}}
\newcommand{\bof}{\mathbf{f}}
 \newcommand{\kn}{\mathbf{k}_n}
\title{Pointwise uncertainty quantification for sparse variational Gaussian process regression with a Brownian motion prior}
\author{%
	Luke Travis\\
	Department of Mathematics\\
	Imperial College London\\
	\texttt{luke.travis15@imperial.ac.uk} 
	\And
	Kolyan Ray\\
	Department of Mathematics\\
	Imperial College London\\
	\texttt{kolyan.ray@imperial.ac.uk} \\	
}
\begin{document}

\maketitle

\begin{abstract}
We study pointwise estimation and uncertainty quantification for a sparse variational Gaussian process method with eigenvector inducing variables. For a rescaled Brownian motion prior, we derive theoretical guarantees and limitations for the frequentist size and coverage of pointwise credible sets. For sufficiently many inducing variables, we precisely characterize the asymptotic frequentist coverage, deducing when credible sets from this variational method are conservative and when overconfident/misleading. We numerically illustrate the applicability of our results and discuss connections with other common Gaussian process priors.
\end{abstract}

\section{Introduction}

Consider the standard nonparametric regression model, where we observe $n$ training samples $\mathcal D_n = \{(x_1,y_1),\dots,(x_n,y_n)\}$ arising from the model
\begin{equation}
y_i = f(x_i) + \eps_i, \qquad \qquad \eps_i \sim^{iid} \mathcal{N}(0, \sigma^2),
\label{eq:regression_model}
\end{equation}
where $\sigma^2>0$ and the design points $x_i \in \mathcal{X} \subset \R^d$ are either fixed or considered as i.i.d. random variables. Our goal is to predict outputs $y^*$ based on new input features $x^*$, while accounting for the statistical uncertainty arising from the training data. A widely-used Bayesian approach is to endow $f$ with a Gaussian process (GP) prior \cite{RW06}, which is especially popular due to its ability to provide \textit{uncertainty quantification} via posterior credible sets.

While explicit expressions for the posterior distribution are available, a well-known drawback is that these require $O(n^3)$ time and $O(n^2)$ memory complexity, making computation infeasible for large data sizes $n$. To avoid this, there has been extensive research on low-rank GP approximations, where one chooses $m\ll n$ inducing variables to summarize the posterior, thereby reducing computation to $O(nm^2)$ time and $O(nm)$ memory complexity, see the recent review \cite{LOSC20}.

We consider here the sparse Gaussian process regression (SGPR) approach introduced by Titsias \cite{T09}, which is widely used in practice (see \cite{A16,M17} for implementations) and has been studied in many recent works \cite{HFL13,MHTG16,BRW19,BRW20,WKS21,RHBSF21,VSSB22,NSZ22a,NSZ22b}. While the computational dependence on the number of inducing variables $m$ is relatively well-understood, fewer theoretical results are available on understanding how $m$ affects the quality of statistical inference.
In particular, it is crucial to understand how large $m$ needs to be to achieve good statistical uncertainty quantification, ideally close to that of the true computationally expensive posterior.
The present work contributes to this research direction by establishing precise theoretical guarantees for pointwise inference using a natural choice of SGPR with eigenvector inducing variables.

\begin{figure}[ht]
    \centering
 \includegraphics[scale = 0.6]{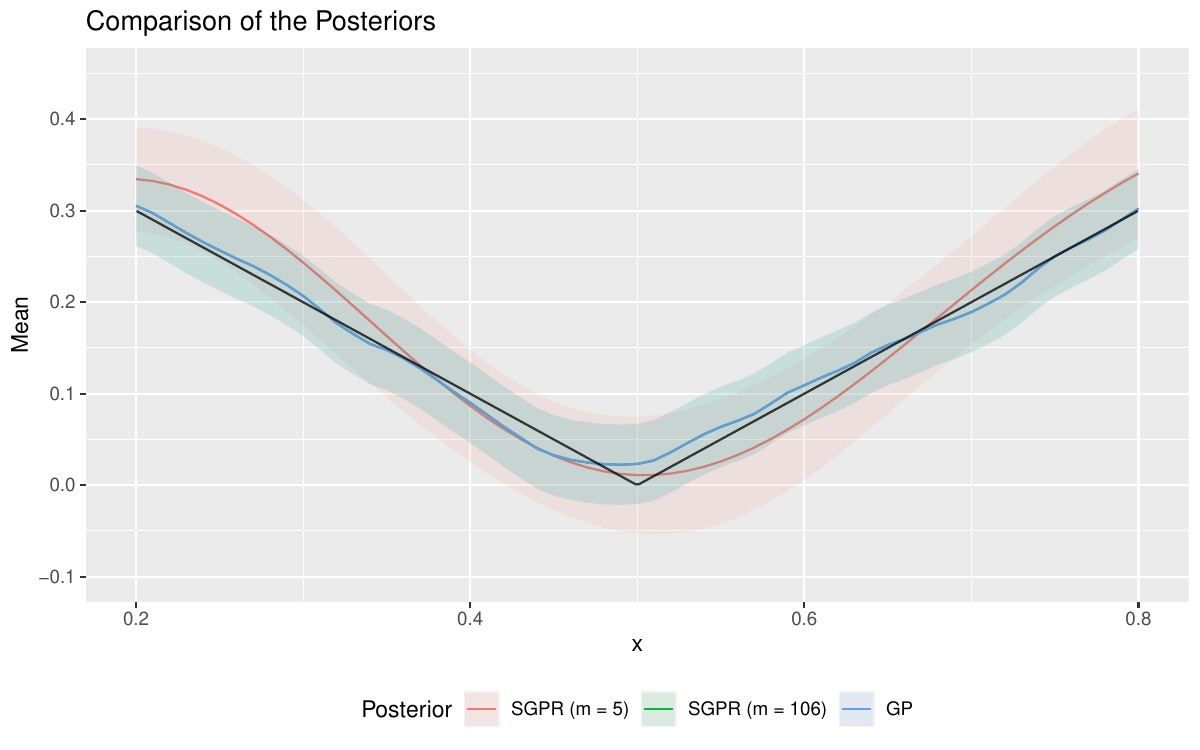}
    \caption{Plot of the (sparse) posterior means with the ribbons representing 95\% posterior pointwise credible intervals for the i) SGPR with $m = 5$ inducing variables (red); ii) SGPR with $m = 106$ inducing variables (green); iii) full GP (blue) and iv) the true function (black). Here, $n = 500$ and the covariance kernel is the rescaled Brownian motion with $\gamma = 0.5$. 
    }
    \label{fig:posterior_comparison}
\end{figure}

A main appeal of Bayesian methods is their ability to perform uncertainty quantification (UQ) via credible sets, often at the 95\% level, see Figure \ref{fig:posterior_comparison} for a visualization involving a full GP and two SGPR approximations. We focus here on the canonical problem of pointwise estimation and UQ for $f(x)$ at a single point $x\in \mathcal{X}$, which corresponds to the practically relevant problem of quantifying the uncertainty of a prediction $f(x)$ based on a new input $x$. We study the frequentist behaviour of the resulting SGPR credible sets, with our main conclusion being that for well-calibrated priors and sufficiently many inducing points $m$, SGPRs can indeed provide reliable, though conservative, uncertainty quantification. 

Concretely, we consider a rescaled Brownian motion prior that models H\"older smooth functions \cite{VZ07,VZ08,SV15} and whose pointwise UQ for the corresponding full posterior was studied in \cite{SV15}.  We extend this analysis to the SGPR setting using the eigendecomposition of the sample covariance matrix as inducing variables, i.e. the optimal rank-$m$ posterior approximation. We provide a rigorous theoretical justification for pointwise estimation and UQ using this SGPR, provided $m$ is large enough. Our main theoretical contributions are summarized that in dimension $d=1$, for an $\alpha$-smooth truth $f_0$, $\gamma$-smooth prior, $\alpha,\gamma\in(0,1]$, and at least $m \gg n^{\frac{1}{1+2\gamma}(\frac{2+\alpha}{1+\alpha})}$ inducing variables:
\begin{enumerate}
\item The SGPR recovers the true value $f_0(x)$ at a point $x$ with convergence rate $n^{-\frac{\min(\alpha,\gamma)}{2\gamma+1}}$. If $\alpha = \gamma$, this gives the minimax optimal rate $n^{-\frac{\alpha}{2\alpha+1}}$.
\item For truths that are smoother than the prior $(\alpha>\gamma)$, SGPR credible sets are conservative with their frequentist coverage converging to a value strictly between their nominal level and one, which we characterize.
\item For truths that are rougher than the prior $(\alpha \leq \gamma)$, coverage can be arbitrarily bad.
\item It is not necessary for the KL divergence between the SGPR and true posterior to vanish as $n\to\infty$ to get good (though conservative) coverage.
\item We provide a more general small-bias condition under which the coverage of SGPR credible sets is at least that of credible sets from the true posterior.
\end{enumerate}

Our proofs exploit specific properties of Brownian motion to obtain explicit expressions, which allow us to obtain the above precise results. Such expressions seem difficult to obtain for other commons GPs (e.g. Mat\'ern, squared exponential), meaning that existing theoretical guarantees for the Mat\'ern SGPR apply to \textit{enlarged} credible sets \cite{VSSB22,NSZ22b}. Since our results indicate that pointwise SGPR credible sets are already conservative, such enlargements make the SGPR UQ procedure very conservative, see Section \ref{sec:other_GPs}. We provide a complimentary perspective to these works, studying \textit{true} credible sets from a related GP, rather than modified credible sets from the Mat\'ern.  

The Brownian motion prior we consider here shares many similarities with the Mat\'ern process, meaning our main qualitative conclusions also apply there, as we demonstrate in simulations. We thus verify the applicability of our limit theorems in numerical simulations, confirming they provide a good guide to the behaviour of SGPRs for finite sample sizes, including those based on Mat\'ern priors. To the best of our knowledge, our work is the first establishing exact theoretical coverage guarantees for SGPR pointwise credible sets, and can be viewed as a guide of what to expect for the other Gaussian processes, see Section \ref{sec:other_GPs}. 

\textbf{Related work.} Rigorous theoretical guarantees for Bayesian UQ using the full posterior have received much recent attention. In parametric models, the classic Bernstein--von Mises theorem \cite{V98} ensures that regular credible sets are asymptotic confidence sets, justifying Bayesian UQ from a frequentist perspective. However, such a result fails to hold in high- and infinite-dimensional settings \cite{F99,J10}, such as GPs, where the performance is sensitive to the choice of prior and model. Theoretical guarantees for UQ using full GPs have been established mainly involving $L^2$-type credible sets (e.g. \cite{KVZ11,CN13,SVZ15}) which, while mathematically more tractable, are less reflective of actual practice than pointwise credible sets. Results for pointwise Bayesian UQ are  more difficult to obtain and are correspondingly much rarer \cite{SV15,YG16,YBP17}, see Section \ref{sec:results}. From a technical perspective, we perform a careful and novel analysis of the gap between the pointwise behaviour of the true posterior and its SGPR approximation, after which we can invoke ideas from the proofs for the full posterior \cite{SV15}.

While the frequentist asymptotic properties of VB have increasingly been investigated, this is usually in the context of estimation, convergence rates or approximation quality (e.g. \cite{PBY18,AR20,ZG20,RS20,RS22}, including for SGPRs \cite{BRW19,BRW20,NSZ22a}) rather than UQ. Available results on mean-field VB often show that this factoriazable approach provides overconfident UQ by underestimating the posterior variance \cite{B06,BKM17}, though there exist more positive results in certain low-dimensional models \cite{WB19}. Regarding UQ for SGPRs, Nieman et al. \cite{NSZ22b} investigate the frequentist coverage of global $L^2$-credible sets from certain SGPRs, showing these can have coverage tending to 1, possibly after enlarging them. Vakili et al. \cite{VSSB22} similarly show that enlarged pointwise or uniform credible sets can have good coverage in a different bandit-like setting.

\textbf{Organization.} In Section \ref{sec:problem} we detail the problem setup, including notation and an overview of (sparse variational) Gaussian process regression. Main results on pointwise convergence and UQ for SGPRs based on a Brownian motion prior are found in Section \ref{sec:results}, discussion on the connections with other GPs in Section \ref{sec:other_GPs}, simulations in Section \ref{sec:simulations} and discussion in Section \ref{sec:discussion}. In the supplement, we provide the proofs of our results (Section \ref{sec:proofs}) and additional simulations (Section \ref{sec:additional_simulations}).

\section{Problem setup and sparse variational Gaussian process regression}\label{sec:problem}

Recall that we observe $n$ training examples $\mathcal{D}_n = \{ (x_1,y_1),\dots,(x_n,y_n)\}$ from model \eqref{eq:regression_model}, and write $\by = (y_1,\dots,y_n)^T$ and $\bof = (f(x_1),\dots,f(x_n))^T$. We denote by $P_f$ the probability distribution of $\by$ from model \eqref{eq:regression_model} and by $E_f$ the corresponding expectation. For $\mathbf{u}, \mathbf{v} \in \mathbb{R}^n$ we write $\langle \mathbf{u}, \mathbf{v} \rangle = \sum_{i = 1}^n \mathbf{u}_i \mathbf{v}_i$ for the inner-product on $\mathbb{R}^n$ and $\|\mathbf{u}\| = \langle \mathbf{u}, \mathbf{u} \rangle^{1/2}$ for the usual Euclidean norm. Write $C(\mathcal{X})$ for the space of continuous real-valued functions on $\mathcal{X}$. For $\alpha \in (0,1]$, we further define the class of H\"older smooth functions on $[0,1]$ by $C^\alpha = C^\alpha[0,1] = \{f:[0,1]\to\R:  \sup_{x\neq y} \frac{|f(x)-f(y)|}{|x-y|^\alpha} <\infty \}$. Throughout the paper we make the following frequentist assumption:

\begin{assumption}\label{assumption}
There is a true $f_0 \in L^2(\mathcal{X})$ generating the data $\by \sim P_{f_0}$ according to \eqref{eq:regression_model}.
\end{assumption}
We differentiate $f$ coming from the Bayesian model with the `true' generative $f_0$. 

\subsection{Sparse variational Gaussian process regression}

In model \eqref{eq:regression_model}, we assign to $f\sim GP(\nu_0,k)$ a Gaussian process (GP) prior with mean function $\nu_0:\mathcal X \to \R$ and covariance kernel $k:\mathcal{X} \times \mathcal{X} \to \R$. We henceforth take the prior mean $\nu_0=0$ for simplicity since this will not alter the spirit of our results, see Remark \ref{rem:uniformity} below. The resulting posterior distribution is again a GP by conjugacy \cite{RW06}, with mean and covariance functions equal to
\begin{equation}\label{eq:post_mean_var}
\begin{split}
	\nu_n(x) &= \kn (x)^T(\mathbf{K}_{nn} + \sigma^2\mathbf{I}_n)^{-1} \boldy,\\
	k_n(x, x') &= k(x, x') - \kn (x)^T (\mathbf{K}_{nn} + \sigma^2\mathbf{I}_n)^{-1}\kn (x'),
\end{split}
\end{equation}
where $\kn (x) = (k(x,x_1), \dots, k(x,x_n))^T$ and $[\mathbf{K}_{nn}]_{ij} = k(x_i, x_j)$. In particular, the posterior variance at a point $x \in \mathcal X$ is $\sigma_n^2(x) = k_n(x,x)$. In Section \ref{sec:rBM} below, we will specifically consider the choice $\mathcal{X}=[0,1]$ and rescaled Brownian motion prior kernel $k(x,x') = (n+1/2)^\frac{1-2\gamma}{1+2\gamma} \min(x,x')$ for $\gamma>0$.

We consider here the sparse variational Gaussian process (SGPR) approximation using inducing variables proposed by Titsias \cite{T09}. The idea is to summarize the posterior via a relatively small number of inducing variables $\boldu = \{u_1,\dots,u_m\}$, $m \ll n$, which are linear functionals of the prior process $f$. 
By assigning $\boldu$ a multivariate normal distribution, one obtains a low-rank variational family.
Titsias \cite{T09} explicitly computed the minimizer in Kullback Leibler sense between this family of GPs and the full posterior, which yields the SGPR $Q^* = GP(\nu_m,k_m)$ with mean and covariance
\begin{equation}\label{eq:var_post_cov}
\begin{split}
	\nu_m(x) & = \mathbf{K}_{xm}(\sigma^2 \mathbf{K}_{mm} + \mathbf{K}_{mn}\mathbf{K}_{nm})^{-1} \mathbf{K}_{mn} \by \\ 
	k_m(x, x')& = k(x, x') - \mathbf{K}_{xm}\mathbf{K}_{mm}^{-1}\mathbf{K}_{mx'}+ \mathbf{K}_{xm} (\mathbf{K}_{mm} + \sigma^{-2} \mathbf{K}_{mn}\mathbf{K}_{nm})^{-1} \mathbf{K}_{mx'}, \\
	\end{split}
\end{equation}
where $\mathbf{K}_{xm} = (\cov(f(x), u_1), \dots, \cov(f(x), u_m)) = \mathbf{K}_{mx}^T$, $[\mathbf{K}_{mm}]_{ij} = \cov(u_i, u_j)$ and we write $[\mathbf{K}_{nm}]_{ij} = \cov(f(x_i),u_j)$. Recalling that $\cov(f(x), f(x')) := k(x, x')$ under the prior, for $u_j$ of the form $u_j = \sum_{i=1}^n v_j^i f(x_i)$ (see \eqref{eq:eigenvector_inducing_variables} below) we obtain $\cov(f(x), u_j) = \sum_{j=1}^n v_j^i k(x, x_i)$.
The SGPR variance at $x\in\mathcal X$ is $\sigma_m^2(x) = k_m(x,x).$ The computational complexity of obtaining $\nu_m$ and $k_m$ is $O(nm^2)$, which is much smaller than the $O(n^3)$ needed to compute the full posterior for $m\ll n$.

\subsection{Eigenvector inducing variables}

The approximation quality of the SGPR depends on the inducing variables being well chosen. We consider here arguably the conceptually simplest and most canonical sparse approximation based on the eigendecomposition of $\mathbf{K}_{nn}$. Let $\mu_1 \geq \mu_2 \geq \dots\geq \mu_n \geq 0$ be the eigenvalues of the positive semi-definite matrix $\mathbf{K}_{nn}$ with corresponding orthonormal eigenvectors $\mathbf{v}_1, \dots, \mathbf{v}_n$. Writing $\mathbf{v}_j = (v_j^1,\dots,v_j^n)^T \in \R^n$, we take as inducing variables
\begin{equation}
u_j = \mathbf{v}_j^T \bof = \sum_{i=1}^n v_j^if(x_i), \hspace{5mm} j = 1,\dots,m, \label{eq:eigenvector_inducing_variables}	
\end{equation}
so that $u_j$ is a linear combination of the inducing points placed at each data point with weights proportional to the entries of the $j^{th}$-largest eigenvector of $\mathbf{K}_{nn}$. Setting $\mathbf{V}_m := [\mathbf{v}_1 \dots \mathbf{v}_m] \in \R^{n\times m}$, this choice of inducing points implies
$$\mathbf{K}_{xm} = \kn (x)^T \mathbf{V}_m, \qquad \mathbf{K}_{mm} = \textrm{diag}(\mu_1, \dots, \mu_m), \qquad \mathbf{K}_{nm} = \mathbf{V}_m \textrm{diag}(\mu_1, \dots, \mu_m) = \mathbf{K}_{mn}^T,$$
see Section C.1 of \cite{BRW19}. Writing $\mathbf{K}_{nn} = \sum_{i=1}^n \mu_i \mathbf{v}_i \mathbf{v}_i^T$ and substituting these expressions into \eqref{eq:var_post_cov}, we obtain the SGPR with mean and covariance functions
\begin{align}
	\nu_m(x) &= \mathbf{k}_n(x)^T \left[\sum_{k=1}^m\eta_k\mathbf{v}_k\mathbf{v}_k^T \right] \mathbf{y} \label{eq:variational_posterior_mean}\\
	k_m(x,x') &= k(x, x') - \mathbf{k}_n(x)^T \left[\sum_{k=1}^m\eta_k \mathbf{v}_k\mathbf{v}_k^T \right]\mathbf{k}_n(x'), \label{eq:variational_posterior_covariance}
\end{align}
where $\eta_k = \frac{1}{\sigma^2 + \mu_k}$. We study inferential properties of the SGPR with mean \eqref{eq:variational_posterior_mean} and covariance \eqref{eq:variational_posterior_covariance}.

Comparing these expressions with those for the full posterior \eqref{eq:post_mean_var}, we see that this SGPR is the optimal rank-$m$ approximation in the sense that we have replaced $(\mathbf{K}_{nn}+\sigma^2\mathbf{I}_n)^{-1} = \sum_{i=1}^n \eta_i \mathbf{v}_i \mathbf{v}_i^T$ in both the mean and covariance \eqref{eq:post_mean_var} with $\sum_{i=1}^m \eta_i \mathbf{v}_i \mathbf{v}_i^T$, corresponding to the $m$ largest eigenvalues of $\mathbf{K}_{nn}$. These inducing variables are an example of interdomain inducing features \cite{LF09}, which can yield sparse representations in the spectral domain and computational benefits \cite{HDS18}. Computing this SGPR involves numerically computing the first $m$ eigenvalues and corresponding eigenvectors of $\mathbf{K}_{nn}$, which can be done in $O(n^2m)$ time using for instance Lanczos iteration \cite{L50}.

This choice of SGPR using eigenvector inducing variables has been shown to minimize certain upper bounds for the Kullback-Leibler (KL) divergence between order $m$ SGPRs and the true posterior \cite{BRW19}, reflecting that it can provide a good posterior approximation. Indeed, one can show that for $m = m_n$ growing fast enough (though sublinearly in $n$), this SGPR converges to the posterior in the KL sense as $n\to\infty$ \cite{BRW19}. Under weaker growth conditions on $m$, this SGPR still converges to the true generative $f_0$ at the minimax optimal rate in the frequentist model \cite{NSZ22a}, even when the SGPR does not converge to the true posterior as $n\to\infty$. Our results include both scenarios and will show that one can still obtain good frequentist UQ performance even for $m$ small enough that the SGPR diverges from the full posterior (see Remark \ref{rem:KL}) unlike in \cite{BRW19}.

\section{Main results}\label{sec:results}
\subsection{General results on pointwise inference}

We now present our main results concerning estimation and uncertainty quantification at a point $x\in \mathcal X$ using the SGPR $Q^*$ with eigenvector inducing variables. Since both the posterior and SGPR are GPs, their marginal distributions at $x$ satisfy $f(x)|\mathcal{D}_n \sim \mathcal{N}(\nu_n(x),\sigma_n^2(x))$ and $f(x) \sim \mathcal{N}(\nu_m(x),\sigma_m(x)^2)$, respectively. It thus suffices to study these quantities under the frequentist assumption that there is a `true' $f_0$ generating the data, i.e. Assumption \ref{assumption}. We further define the frequentist bias and variance of the (sparse) posterior mean:
\begin{align*}
&b_n(x) = E_{f_0}(\nu_n(x) - f_0(x)),   &b_m(x) = E_{f_0}(\nu_m(x) - f_0(x)),\\
&t_n^2(x) = \var_{f_0} (\nu_n(x)), &t_m^2(x) = \var_{f_0}(\nu_m(x)),
\end{align*}
where the expectation and variance are taken over $\by \sim P_{f_0}$.  We have the following useful relationships between these quantities for the variational posterior and full posterior, which assumes fixed design, so that all statements are taken conditional on the design points $x_1,\dots,x_n \in \mathcal{X}$. For simplicity, we take the noise variance to be $\sigma^2 = 1$ in \eqref{eq:regression_model} for our theoretical results.

\begin{lemma}\label{lem:var_to_full_relationships}
	For $f_0 \in L^2(\mathcal{X})$, $x\in \mathcal{X}$ and $\mathbf{r}_m(x) := \left[\sum_{j = m+1}^n \eta_j \mathbf{v}_j \mathbf{v}_j^T \right] \mathbf{k}_n(x)$, we have:
	\begin{align}
		b_m(x) &=b_n(x)  -  \langle \mathbf{r}_m(x), \mathbf{f_0} \rangle\label{eq:var_mean_vs_full_mean}\\
			t^2_m(x) &= t^2_n(x) - \|\mathbf{r}_m(x)\|^2 \leq t_n^2(x) \label{eq:var_vofmean_vs_full_vofmean}\\
\sigma^2_m(x) &= \sigma^2_n(x) + \langle \mathbf{r}_m(x), \mathbf{k}_n(x)\rangle \geq \sigma^2_n(x).\label{eq:var_var_vs_full_var}
	\end{align}
\end{lemma}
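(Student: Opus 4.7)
The plan is to exploit the spectral decomposition of $\mathbf{K}_{nn}$ that is built into the eigenvector-inducing construction, so that both the full posterior and the SGPR quantities become explicit sums over the eigenbasis $\{\mathbf{v}_i\}$. Since $(\mathbf{K}_{nn}+\mathbf{I}_n)^{-1}=\sum_{i=1}^n \eta_i\mathbf{v}_i\mathbf{v}_i^T$ while the SGPR replaces this with $\sum_{i=1}^m \eta_i\mathbf{v}_i\mathbf{v}_i^T$, the difference operator is exactly the tail $\sum_{i=m+1}^n \eta_i\mathbf{v}_i\mathbf{v}_i^T$, whose action on $\mathbf{k}_n(x)$ is the vector $\mathbf{r}_m(x)$. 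Everything reduces to pushing this observation through the definitions of $b$, $t^2$ and $\sigma^2$.

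First I would write
\begin{equation*}
\nu_n(x)-\nu_m(x)=\mathbf{k}_n(x)^T\Big[\sum_{i=m+1}^n\eta_i\mathbf{v}_i\mathbf{v}_i^T\Big]\mathbf{y}=\langle \mathbf{r}_m(x),\mathbf{y}\rangle,
\end{equation*}
using symmetry of the bracketed matrix. Taking expectation under $\mathbf{y}=\mathbf{f}_0+\boldsymbol{\varepsilon}$ with $\boldsymbol{\varepsilon}\sim\mathcal N(0,\mathbf{I}_n)$ immediately yields $b_n(x)-b_m(x)=\langle \mathbf{r}_m(x),\mathbf{f}_0\rangle$, which rearranges to \eqref{eq:var_mean_vs_full_mean}.

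Next, for the variance of the mean, using orthonormality of $\{\mathbf{v}_i\}$ (so that $(\mathbf{v}_i\mathbf{v}_i^T)(\mathbf{v}_j\mathbf{v}_j^T)=\delta_{ij}\mathbf{v}_i\mathbf{v}_i^T$), I would compute
\begin{equation*}
t_n^2(x)=\sum_{i=1}^n\eta_i^2\bigl(\mathbf{v}_i^T\mathbf{k}_n(x)\bigr)^2,\qquad t_m^2(x)=\sum_{i=1}^m\eta_i^2\bigl(\mathbf{v}_i^T\mathbf{k}_n(x)\bigr)^2,
\end{equation*}
and identify the tail $\sum_{i=m+1}^n\eta_i^2(\mathbf{v}_i^T\mathbf{k}_n(x))^2$ as $\|\mathbf{r}_m(x)\|^2$ by Parseval, giving \eqref{eq:var_vofmean_vs_full_vofmean}. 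The inequality is immediate since $\|\mathbf{r}_m(x)\|^2\geq 0$.

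Finally, for the posterior variances, the SGPR covariance formula \eqref{eq:variational_posterior_covariance} evaluated at $x=x'$ gives $\sigma_m^2(x)=k(x,x)-\sum_{i=1}^m\eta_i(\mathbf{v}_i^T\mathbf{k}_n(x))^2$, while the spectral form of \eqref{eq:post_mean_var} gives $\sigma_n^2(x)=k(x,x)-\sum_{i=1}^n\eta_i(\mathbf{v}_i^T\mathbf{k}_n(x))^2$. Subtracting and recognising the tail as $\mathbf{k}_n(x)^T\mathbf{r}_m(x)=\langle \mathbf{r}_m(x),\mathbf{k}_n(x)\rangle$ yields \eqref{eq:var_var_vs_full_var}, with the inequality following from $\eta_j>0$. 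There is no real obstacle here — this is essentially a bookkeeping lemma, and the only thing one needs to be careful about is matching the two different spectral tails (with weights $\eta_i$ for the posterior variance versus $\eta_i^2$ for the variance of the mean) to the correct quadratic form in $\mathbf{r}_m(x)$.
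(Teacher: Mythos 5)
Your proposal is correct and follows essentially the same route as the paper: identify the difference between the full-posterior and SGPR operators as the spectral tail $\sum_{j=m+1}^n\eta_j\mathbf{v}_j\mathbf{v}_j^T$ acting on $\mathbf{k}_n(x)$, then push this through the definitions of $b$, $t^2$ and $\sigma^2$, using orthonormality of the eigenvectors to match the $\eta_j$-weighted tail to $\langle\mathbf{r}_m(x),\mathbf{k}_n(x)\rangle$ and the $\eta_j^2$-weighted tail to $\|\mathbf{r}_m(x)\|^2$. The paper phrases the $t^2$ step as a Pythagorean decomposition rather than a coordinate sum, but the content is identical.
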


The quantity $\mathbf{r}_m(x)$ measures the rank gap due to ignoring the smallest $n-m$ eigenvalues of $\mathbf{K}_{nn}$.
The SGPR approximation increases the posterior variance $\sigma_m^2(x) \geq \sigma^2_n(x)$ while reducing the frequentist variance of the posterior mean $t^2_m(x) \leq t^2_n(x)$. Thus if one can control the biases in \eqref{eq:var_mean_vs_full_mean}, credible sets from the SGPR will have coverage at least as large as the full posterior. One can therefore ensure good coverage by taking $m=m_n$ sufficiently small, but this can still lead to poor UQ by making the resulting credible intervals extremely wide, and therefore uninformative. We thus first study the effect of $m$ on the quality of estimation, via the posterior convergence rate.

\begin{proposition}[Pointwise contraction]
\label{prop:contraction_rate_general}
For $f_0 \in C(\mathcal{X})$, $x\in \mathcal{X}$ and $m=m_n\to\infty$,
$$E_{f_0} Q^* (f: |f(x) - f_0(x)| > M_n \eps_m  |\mathcal{D}_n) \to 0,$$
as $n\to\infty$, where $M_n \to \infty$ is any (arbitrarily slowly growing) sequence and
$$\eps_m^2 =  b^2_n(x) + t_n^2(x) + \sigma_n^2(x) + |\langle \mathbf{r}_m(x), \bof_0 \rangle|^2 + |\langle \mathbf{r}_m(x),  \mathbf{k}_n(x)\rangle|^2.$$
If $\langle \mathbf{r}_m(x), \bof_0 \rangle = o(b_n(x))$ and $\langle \mathbf{r}_m(x),  \mathbf{k}_n(x)\rangle = o(\sigma^2_n(x))$ as $n\to\infty$, then the rate matches that of the true posterior $\eps_n^2 =  b_n^2(x) + t_n^2(x) + \sigma_n^2(x)$.
\end{proposition}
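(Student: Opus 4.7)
The core observation is that, conditional on the data, $f(x)$ is Gaussian under $Q^*$ with mean $\nu_m(x)$ and variance $\sigma_m^2(x)$, so all tail probabilities can be controlled by first and second moments. I would therefore begin with a simple Markov (Chebyshev) estimate, and only afterwards use Lemma~\ref{lem:var_to_full_relationships} to convert SGPR bias/variance quantities into their full-posterior counterparts plus the $\mathbf{r}_m$ correction terms.

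\textbf{Step 1: Gaussian Markov bound.} Conditional on $\mathcal{D}_n$, one has $E_{Q^*}[(f(x)-f_0(x))^2 \mid \mathcal{D}_n] = (\nu_m(x)-f_0(x))^2 + \sigma_m^2(x)$. Hence by Markov,
\begin{equation*}
Q^*\bigl(|f(x)-f_0(x)| > M_n\eps_m \,\bigm|\, \mathcal{D}_n\bigr) \;\leq\; \frac{(\nu_m(x)-f_0(x))^2 + \sigma_m^2(x)}{M_n^2\,\eps_m^2}.
\end{equation*}
Taking $E_{f_0}$ and using the bias--variance decomposition $E_{f_0}(\nu_m(x)-f_0(x))^2 = b_m^2(x) + t_m^2(x)$ (which holds because $\var_{f_0}$ is linear in $\by$ and $E_{f_0}\nu_m(x) = f_0(x)+b_m(x)$), the left-hand side is bounded by $(b_m^2(x)+t_m^2(x)+\sigma_m^2(x))/(M_n^2\eps_m^2)$.

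\textbf{Step 2: Translation via Lemma~\ref{lem:var_to_full_relationships}.} Next I would substitute the three identities from Lemma~\ref{lem:var_to_full_relationships}. Expanding the squared-bias identity $b_m(x)=b_n(x)-\langle \mathbf{r}_m(x),\bof_0\rangle$ by $(a-b)^2\leq 2a^2+2b^2$ yields $b_m^2(x)\leq 2b_n^2(x)+2|\langle \mathbf{r}_m(x),\bof_0\rangle|^2$; the inequality $t_m^2(x)\leq t_n^2(x)$ is used directly; and $\sigma_m^2(x)=\sigma_n^2(x)+\langle \mathbf{r}_m(x),\mathbf{k}_n(x)\rangle$. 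Summing and recognising the terms gives the numerator up to a constant multiple of $\eps_m^2$, so the probability is at most $C M_n^{-2}\to 0$ for any $M_n\to\infty$. (The $\mathbf{r}_m$ correction to $\sigma_m^2$ is non-negative and linear; if one prefers a squared quantity as in the statement, one absorbs it using $\langle \mathbf{r}_m,\mathbf{k}_n\rangle \lesssim \sigma_n^2 + \langle \mathbf{r}_m,\mathbf{k}_n\rangle^2/\sigma_n^2$, which is harmless under the regime considered.)

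\textbf{Step 3: Matching the full-posterior rate.} For the second claim, under $\langle \mathbf{r}_m(x),\bof_0\rangle = o(b_n(x))$ and $\langle \mathbf{r}_m(x),\mathbf{k}_n(x)\rangle = o(\sigma_n^2(x))$, the correction terms in $\eps_m^2$ are dominated by $b_n^2(x)+\sigma_n^2(x)$, so $\eps_m^2 \lesssim b_n^2(x)+t_n^2(x)+\sigma_n^2(x)=\eps_n^2$ and the SGPR contraction rate coincides with that derived for the full posterior.

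\textbf{Main obstacle.} The argument is essentially a one-line Chebyshev estimate, so nothing is technically difficult; the real work is conceptual, in correctly identifying the decomposition that isolates the SGPR approximation error from the full-posterior bias/variance. The only subtlety is that $t_m^2(x)$ cannot be cleanly expanded on its own (the relationship in Lemma~\ref{lem:var_to_full_relationships} has a negative sign), so one must use the inequality $t_m^2\leq t_n^2$ rather than attempt an identity, and then verify that ignoring the resulting slack does not worsen the rate relative to the full-posterior rate $\eps_n$.
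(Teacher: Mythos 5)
Your proof is correct and follows essentially the same route as the paper's: a Markov/Chebyshev bound on the conditional second moment $E_{Q^*}[(f(x)-f_0(x))^2\mid\mathcal D_n]=(\nu_m(x)-f_0(x))^2+\sigma_m^2(x)$, followed by the bias--variance decomposition under $E_{f_0}$ and the identities of Lemma~\ref{lem:var_to_full_relationships} (the paper only writes out the full-posterior case and states that the SGPR case "follows the same"; you carry out that substitution explicitly). Your observation that the $\sigma_m^2$ correction enters linearly as $\langle \mathbf{r}_m(x),\mathbf{k}_n(x)\rangle$ rather than as its square is a fair catch of an imprecision in the stated $\eps_m^2$, and your handling of it is adequate in the regime where the second claim applies.
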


The last result says that the SGPR \textit{contracts} about the true $f_0$ at rate $\eps_m$, i.e. it puts all but a vanishingly small amount of probability on functions for which $|f(x)-f_0(x)| \leq M_n \eps_m$, where $f_0$ is the true function generating the data. Such results not only quantify the typical distance between a point estimator $\hat{f}(x)$ (e.g. SGPR mean/median) and the truth ({\cite{GV17}, Theorem 8.7), but also the typical spread of $Q^*$ about the truth. Ideally, most of the $Q^*$-probability should be concentrated on such sets with radius $\eps_m$ proportional to the optimal (minimax) rate, whereupon they have smallest possible size from an information-theoretic perspective.

Contraction rates in global $L^2$-losses have been considered in the variational Bayes literature \cite{PBY18,AR20,ZG20,RS20,RS22,NSZ22a}. Such results crucially rely on the true posterior concentrating exponentially fast about the truth (see e.g. Theorem 5 in \cite{RS22}), which happens in losses which geometrize the statistical model \cite{HRS15}. This is known \textit{not} to be true for pointwise or uniform loss \cite{HRS15}, and hence these previous proof approaches cannot be applied to the present pointwise setting. We must thus use an alternative approach, exploiting the explicit Gaussian structure of the  SGPR.

The SGPR approach in UQ is to consider a smallest \textit{pointwise credible set} of probability $1-\delta$:
\begin{equation}\label{eq:credible_set}
C_m^\delta = C_m^\delta(\mathcal D_n) = [\nu_m(x) - z_{1-\delta} \sigma_m(x),\nu_m(x) + z_{1-\delta} \sigma_m(x)],
\end{equation}
with $P(|N(0,1)| \leq z_{1-\delta}) = 1-\delta$. We next establish guarantees on the \textit{frequentist coverage} of $C_m^\delta$, i.e. the probability $1-\delta'$ with $\inf_{f_0 \in \mathcal{F}} P_{f_0} (f_0(x) \in C_m^\delta) \geq 1-\delta'$ over a target function class $\mathcal{F}$.


\begin{remark}[Zero prior mean]\label{rem:uniformity}
Since $f_0$ is a-priori unknown, such coverage guarantees must hold uniformly over a function class $\mathcal{F}$ to be meaningful. For symmetric function classes, where $f\in\mathcal{F}$ implies $-f\in\mathcal{F}$, taking a good prior mean $\nu_0$ for $f$ will make estimating $-f$ correspondingly more difficult. Thus without loss of generality we may take zero mean $\nu_0 = 0$.
\end{remark}

\begin{proposition}[Pointwise UQ]\label{prop:control_remainders_implies_coverage}
 	Suppose that $\langle \mathbf{r}_m(x), \bof_0 \rangle = o(b_n(x))$ as $n\to\infty$.
 	Then for $m=m_n\leq n$ and any $\delta \in (0,1)$, the frequentist coverage of the credible sets satisfies
	 $$
	 \liminf_{n\to\infty} P_{f_0} (f_0(x) \in C_m^\delta) \geq \liminf_{n\to\infty} P_{f_0} (f_0(x) \in C_n^\delta).
	 $$
	If in addition $\|\mathbf{r}_m(x)\|^2 = o(t_n^2(x))$ and $\langle \mathbf{r}_m(x), \mathbf{k}_n(x)\rangle = o(\sigma_n^2(x))$, then 
	 $$
	   \lim_{n\rightarrow \infty} P_{f_0}(f_0(x) \in C_m^\delta)  = \lim_{n\rightarrow \infty}P_{f_0}(f_0(x) \in C_n^\delta).
	 $$
\end{proposition}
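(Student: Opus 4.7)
The plan is to reduce the claim to an explicit comparison of Gaussian-tail probabilities by combining the closed-form coverage expression with Lemma \ref{lem:var_to_full_relationships}. First, since $\nu_m(x) - f_0(x)$ is a Gaussian linear functional of $\by$ under $P_{f_0}$, with mean $b_m(x)$ and variance $t_m^2(x)$, the coverage admits the closed form
\[
P_{f_0}(f_0(x) \in C_m^\delta) = \Phi\!\left(\frac{z_{1-\delta}\sigma_m(x) - b_m(x)}{t_m(x)}\right) - \Phi\!\left(\frac{-z_{1-\delta}\sigma_m(x) - b_m(x)}{t_m(x)}\right),
\]
and analogously for the full posterior. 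Introducing standardized parameters $\mu_\ast = b_\ast/\sigma_\ast$ and $\tau_\ast = t_\ast/\sigma_\ast$ for $\ast \in \{n, m\}$, this reduces to $h(\mu_\ast, \tau_\ast) := P(|\mu_\ast + \tau_\ast Z| \leq z_{1-\delta})$ with $Z \sim \mathcal{N}(0,1)$.

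Next, I would combine Lemma \ref{lem:var_to_full_relationships} with the hypothesis $\langle \mathbf{r}_m(x), \bof_0 \rangle = o(b_n(x))$ to extract the key comparisons $\sigma_m \geq \sigma_n$, $t_m \leq t_n$, and $b_m = b_n(1+o(1))$ (interpreted as $b_m = 0$ when $b_n = 0$), which in standardized form combine into $|\mu_m| \leq |\mu_n|(1+o(1))$ and $\tau_m \leq \tau_n$; thus the SGPR credible interval is wider in units of frequentist fluctuation while its bias is no larger. For the first inequality, I would argue by subsequences: along any subsequence where the full-posterior coverage converges to its $\liminf$, compactness lets us assume $\mu_n, \tau_n, \mu_m, \tau_m$ all converge in the extended reals. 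If $|\mu_n| \to \infty$ or $\tau_n \to \infty$ the limit is zero and the claim is vacuous; otherwise the limit equals $h(\mu^\ast, \tau^\ast) > 0$, and the structural inequalities above together with monotonicity of $h$ in $|\mu|$ (and in $\tau$ on $|\mu| \leq z_{1-\delta}$) yield $h(\mu_m, \tau_m) \geq h(\mu_n, \tau_n) + o(1)$. For the second equality, the additional assumptions $\|\mathbf{r}_m(x)\|^2 = o(t_n^2(x))$ and $\langle \mathbf{r}_m(x), \kn(x)\rangle = o(\sigma_n^2(x))$ force $t_m^2/t_n^2, \sigma_m^2/\sigma_n^2 \to 1$, so together with $b_m/b_n \to 1$ the standardized pairs agree asymptotically and the limits coincide by joint continuity of $h$.

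The hardest part I anticipate is the regime $|\mu^\ast| > z_{1-\delta}$ in the first inequality, where $h$ is no longer monotone in $\tau$ and the naive coordinatewise comparison can go the wrong way. Closing this gap should require the finer identity $t_n^2 - t_m^2 \leq \sigma_m^2 - \sigma_n^2$ (which follows from $\eta_j \in (0,1]$ for $j > m$) together with the orthogonal decomposition $\nu_n(x) - \nu_m(x) = \langle \mathbf{r}_m(x), \by\rangle$ and the independence of $\nu_m(x) - f_0(x)$ and $\nu_n(x) - \nu_m(x)$ under $P_{f_0}$, so that the widening of the credible interval exactly compensates the extra Gaussian fluctuation absorbed into $\nu_n - f_0$ relative to $\nu_m - f_0$.
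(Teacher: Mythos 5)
Your overall route is the same as the paper's: express the coverage as $P(|V_\ast|\le z_{1-\delta})$ with $V_\ast\sim\mathcal N\bigl(b_\ast(x)/\sigma_\ast(x),\,t_\ast^2(x)/\sigma_\ast^2(x)\bigr)$ for $\ast\in\{n,m\}$ and feed in the comparisons from Lemma \ref{lem:var_to_full_relationships}. Your argument for the second claim (the limit equality) is exactly the paper's and is complete. For the first claim, the paper's proof consists of the one-line assertion that $|b_m|/\sigma_m\le(1+o(1))|b_n|/\sigma_n$ together with $t_m^2/\sigma_m^2\le t_n^2/\sigma_n^2$ yields the $\liminf$ inequality; you have correctly located the only place where this coordinatewise comparison is not automatic, namely subsequences along which the limiting standardized bias exceeds $z_{1-\delta}$, where $P(|\mathcal N(\mu,\tau^2)|\le z_{1-\delta})$ is no longer decreasing in $\tau$. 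Your diagnosis of the difficulty is therefore sharper than the justification the paper itself gives.

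The gap is that your proposed repair of that regime is only announced, not executed, and the mechanism you name does not suffice on its own. The finer identity $t_n^2-t_m^2=\|\mathbf{r}_m(x)\|^2\le\langle\mathbf{r}_m(x),\mathbf{k}_n(x)\rangle=\sigma_m^2-\sigma_n^2$ and the independence of $\nu_m(x)-f_0(x)$ and $\nu_n(x)-\nu_m(x)$ are both compatible with the inequality reversing: take $\langle\mathbf{r}_m(x),\mathbf{f}_0\rangle=0$ (so $b_m=b_n$), $b_n=2z_{1-\delta}\sigma_n$, $t_n=\sigma_n$, and $\|\mathbf{r}_m(x)\|^2=\langle\mathbf{r}_m(x),\mathbf{k}_n(x)\rangle=s_n^2$ with $s_n^2\uparrow\sigma_n^2$ (allowed since $\eta_j\le 1$ permits near-equality). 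Then $P(|V_n|\le z_{1-\delta})=\Phi(-z_{1-\delta})-\Phi(-3z_{1-\delta})>0$ for every $n$, while $t_m^2=\sigma_n^2-s_n^2\to0$ and $\sigma_m^2\to2\sigma_n^2$, so $V_m$ concentrates at $b_m/\sigma_m\to\sqrt{2}\,z_{1-\delta}>z_{1-\delta}$ and $P(|V_m|\le z_{1-\delta})\to0$. Hence to finish the first claim one must either restrict attention to subsequences with $\limsup|b_n|/\sigma_n\le z_{1-\delta}$ (which is all that is used downstream, since Theorem \ref{thm:coverage} only invokes the second, equality, part) or show that the configuration above cannot arise for a genuine posterior/SGPR pair; neither your sketch nor the paper's own proof supplies such an argument, so the hardest step you flagged remains open in your write-up.
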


The first conclusion says that if the rank gap is smaller than the posterior bias (a `small-bias' condition), SGPR credible sets will have asymptotic coverage at least at the level of the original full posterior, though perhaps larger, including possibly tending to one. The second conclusion gives further conditions on the rank gap under which the asymptotic coverages will be the same. Together with Proposition \ref{prop:contraction_rate_general}, the goal is to obtain a credible set $C_m^\delta$ of smallest possible diameter subject to having sufficient coverage.

\subsection{Fixed design with rescaled Brownian motion prior}\label{sec:rBM}

We now apply our general results to a specific GP and find conditions on the number of inducing variables $m$ needed to get good UQ.
Consider the domain $\mathcal{X} = [0,1]$ with regularly spaced design points $x_i = \frac{i}{n+1/2}$ for $i = 1, \dots, n$. For $B$ a standard Brownian motion, consider as prior $f = \sqrt{c_n}B$, where $c_n = (n+1/2)^\frac{1-2\gamma}{1+2\gamma}$ for $\gamma >0$. Thus $f$ is a mean-zero GP with covariance kernel $k(x, x') = c_n(x \wedge x')$, where $x \wedge x' := \min(x,x')$. The scaling factor $c_n$ controls the smoothness of the sample paths of the GP and plays the same role as the lengthscale parameter for stationary GPs. The present rescaled Brownian motion is a suitable prior to model H\"older functions of smoothness $\gamma \in (0,1]$ (e.g. \cite{VZ07,KVZ11,SV15}). In particular, for a true $f_0 \in C^\alpha$, $\alpha\in(0,1]$, one obtains (full) posterior contraction rate $n^{-\frac{\alpha \wedge \gamma}{2\gamma+1}}$ in the global $L^2$-loss \cite{VZ07}. We sometimes write $Q_\gamma^* = Q^*$ for the corresponding SGPR to make explicit that the underlying prior is $\gamma$-smooth.

\begin{theorem}\label{prop:contraction_BM}
Let $f_0 \in C^\alpha[0,1]$, $\alpha\in(0,1]$ and $x\in (0,1)$. Consider the SGPR $Q_\gamma^*$ with rescaled Brownian motion prior of regularity $\gamma \in(0,1]$ and $m=m_n\to\infty$ inducing variables. Then
$$E_{f_0} Q_\gamma^* (f: |f(x) - f_0(x)| > M_n ( n^{-\frac{\alpha \wedge \gamma}{1+2\gamma}} + n^{\frac{2}{1+2\gamma}}m^{-1+\alpha} + n^{\frac{1/2-\gamma}{1+2\gamma}}m^{-3/2})  |\mathcal{D}_n) \to 0$$
as $n\to\infty$, where $M_n \to \infty$ is any (arbitrarily slowly growing) sequence.
	If $m_n \gg n^{\frac{1}{1+2\gamma}(\frac{2+\alpha}{1+\alpha})}$ then the contraction rate of the SGPR matches that of the full posterior $n^{-\frac{\alpha\wedge\gamma}{1+2\gamma}}$.
\end{theorem}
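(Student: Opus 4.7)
The proof is a direct specialization of Proposition~\ref{prop:contraction_rate_general} to the rescaled Brownian motion prior. The task reduces to bounding the five summands of $\eps_m^2$: the standard triplet $b_n^2(x), t_n^2(x), \sigma_n^2(x)$ associated with the full posterior, together with the two rank-gap remainders $|\langle \mathbf{r}_m(x), \bof_0\rangle|^2$ and $|\langle \mathbf{r}_m(x), \mathbf{k}_n(x)\rangle|^2$. For the first three I would directly invoke the pointwise analysis of Sniekers and van der Vaart \cite{SV15} for precisely this prior, which yields $b_n^2(x) + t_n^2(x) + \sigma_n^2(x) \lesssim n^{-2(\alpha \wedge \gamma)/(1+2\gamma)}$; this accounts for the first summand of the claimed rate.

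The remainder of the argument is a careful asymptotic analysis of the two rank-gap terms, which hinges on the explicit eigendecomposition of $\mathbf{K}_{nn}$. Writing $\mathbf{K}_{nn} = \frac{c_n}{n+1/2}[\min(i,j)]_{i,j}$, the discrete minimum matrix has the classical eigendecomposition with eigenvalues $1/(4\sin^2(\theta_k/2))$, $\theta_k = (2k-1)\pi/(2n+1)$, and orthonormal eigenvectors $v_k^i = \sqrt{2/(n+1/2)}\sin(\theta_k i)$. After rescaling, one finds $\mu_k \asymp n^{2/(1+2\gamma)}/k^2$ for $k \ll n$; consequently $\eta_k \asymp k^2/n^{2/(1+2\gamma)}$ on the regime $k \leq k_* := n^{1/(1+2\gamma)}$ (where $\mu_k \gtrsim 1$) and $\eta_k \asymp 1$ on the complementary regime. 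A useful further reduction is to observe that $g_k(x) := \mathbf{v}_k^T \mathbf{k}_n(x)$ is the piecewise linear interpolation of the values $\{\mu_k v_k^j\}_j$ at the design points, since $\mathbf{k}_n(x_j)$ is the $j$-th column of $\mathbf{K}_{nn}$ and $k(x,x') = c_n\min(x,x')$ is piecewise linear in $x$. This yields $|g_k(x)| \lesssim \mu_k/\sqrt{n}$, and the bound on $\langle \mathbf{r}_m(x), \mathbf{k}_n(x)\rangle = \sum_{k>m}\eta_k g_k(x)^2$ then follows by splitting the sum at $k_*$ and combining the asymptotics of $\eta_k$ and $g_k(x)^2$, producing the third summand in the stated rate.

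The main obstacle is the mixed remainder $\langle \mathbf{r}_m(x), \bof_0\rangle = \sum_{k>m}\eta_k g_k(x)(\mathbf{v}_k^T\bof_0)$, which couples the kernel-side factor with the discrete sine coefficients of the truth. The plan is to approximate $\mathbf{v}_k^T\bof_0$ by a rescaled continuous Fourier sine coefficient $\sqrt{n}\langle \phi_k, f_0\rangle$ with $\phi_k(x) = \sqrt{2}\sin((2k-1)\pi x/2)$, the quadrature error being controlled through the H\"older regularity of $f_0$. A classical shift-of-variables argument in the sine integral then gives $|\langle \phi_k, f_0\rangle| = O(k^{-\alpha})$ for $f_0 \in C^\alpha$, from which the required bound on the mixed remainder follows by splitting the sum at $k_*$ and inserting the asymptotics of $\eta_k g_k(x)$. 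Making the decay of $\mathbf{v}_k^T\bof_0$ rigorous uniformly over the full range of $k$, and in particular handling boundary effects and aliasing near $k = k_*$, is the most delicate ingredient. The final claim regarding the threshold $m_n \gg n^{(2+\alpha)/((1+\alpha)(1+2\gamma))}$ then reduces to a routine algebraic comparison ensuring both rank-gap terms are $o(n^{-(\alpha \wedge \gamma)/(1+2\gamma)})$.
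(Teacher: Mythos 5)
Your proposal follows essentially the same route as the paper: it specializes Proposition \ref{prop:contraction_rate_general}, imports the full-posterior asymptotics for $b_n$, $t_n$, $\sigma_n$ from \cite{SV15}, and controls the two rank-gap terms via the explicit sine eigendecomposition of $\mathbf{K}_{nn}$ together with the $O(k^{-\alpha})$ decay of the discretized sine coefficients $\mathbf{v}_k^T\bof_0$ of a $C^\alpha$ truth, exactly as in Lemmas \ref{lem:evals_and_evecs} and \ref{prop:control_variational_remainders}. The only genuine deviation is your observation that $x \mapsto \mathbf{v}_k^T\kn(x)$ is piecewise linear with values $\mu_k v_k^j$ at the design points, which handles non-design $x$ more cleanly than the paper's explicit three-term expansion \eqref{eq:des_rem} but does not change the substance of the argument.
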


Theorem \ref{prop:contraction_BM} shows that for $m\gg n^{\frac{1}{1+2\gamma}(\frac{2+\alpha}{1+\alpha})}$ inducing points, the SGPR attains the same pointwise contraction rate as the full posterior. If $\gamma = \alpha$, we then recover the optimal (minimax) rate of convergence for a $C^\alpha$-smooth function. For instance if $\alpha = \gamma = 1$, our result indicates that one can do minimax optimal pointwise estimation based on $m_n \gg \sqrt{n}$ inducing variables, a substantial reduction over the full $n$ observations. Note this convergence guarantee  can still hold in cases when the SGPR diverges from the posterior as $n\to\infty$, see Remark \ref{rem:KL}.

The restriction $\alpha \in (0,1]$ comes from rescaling the underlying Brownian motion \cite{VZ07}. One can extend this to $\alpha>1$ by considering a smoother baseline process \cite{VZ07}, such as integrated Brownian motion, but the more complex form of the resulting eigenvectors and eigenvalues of $\mathbf{K}_{nn}$ make our explicit proof approach difficult. Note also that the prior fixes the value $f(0) = 0$. One can avoid this by adding an independent normal random variable to $f$, but since we consider pointwise inference at a point $x>0$, this will not affect our results and we thus keep $f(0) = 0$ for simplicity. Nonetheless, the message here is clear: for sufficiently many inducing variables (but polynomially less than $n$), an SGPR based on a $\gamma$-smooth prior can estimate an $\alpha$-smooth truth at rate $n^{-\frac{\alpha\wedge\gamma}{1+2\gamma}}$. We next turn to UQ.

%
%

\begin{theorem}\label{thm:coverage}
Let $f_0 \in C^\alpha[0,1]$, $\alpha \in (0,1]$ and $x\in (0,1)$. Consider the SGPR $Q_\gamma^*$ with rescaled Brownian motion prior of regularity $\gamma \in(0,1]$ and $m = m_n \gg n^{\frac{1}{1+2\gamma}(\frac{2+\alpha}{1+\alpha})}$ inducing variables. For $\delta \in(0,1)$, let $q_m^\delta:= P_{f_0} (f_0 \in C_m^\delta(x))$ denote the frequentist coverage of the $1-\delta$ credible set $C_m^\delta(x)$ given by \eqref{eq:credible_set}. Then as $n\to\infty$:
	\begin{itemize}
		\item[(i)] (Undersmoothing case) If $\alpha > \gamma$, then $q_{m}^\delta \rightarrow P(|N(0,1/2)| \leq z_{1-\delta}) =: p_{\delta} > 1-\delta$ for all $f \in C^{\alpha}[0,1]$, where $P(|N(0,1)|\leq z_{1-\delta})=1-\delta$ and $N(0,1/2)$ is the normal distribution with mean zero and variance $1/2$.
		\item[(ii)] (Correct smoothing case) If $\alpha = \gamma$, then for each $p \in (0, p_\delta]$, there exists $f \in C^\alpha[0,1]$ such that $q_m^\delta \rightarrow p$.
		\item[(iii)] (Oversmoothing case) If $\alpha < \gamma$, there exists $f \in C^\alpha[0,1]$ such that $q_m^\delta \rightarrow 0$.
	\end{itemize}
\end{theorem}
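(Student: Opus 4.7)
The plan is to first apply Proposition \ref{prop:control_remainders_implies_coverage} to reduce the SGPR coverage $q_m^\delta$ to the corresponding full-posterior coverage $P_{f_0}(f_0(x) \in C_n^\delta)$, and then exploit the explicit spectral structure of the rescaled Brownian motion prior to compute this limit in each of the three smoothness regimes.

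First, I would verify the three control conditions on the rank-gap vector $\mathbf{r}_m(x)$ in Proposition \ref{prop:control_remainders_implies_coverage}, namely $\langle \mathbf{r}_m(x), \mathbf{f}_0 \rangle = o(b_n(x))$, $\|\mathbf{r}_m(x)\|^2 = o(t_n^2(x))$ and $\langle \mathbf{r}_m(x), \mathbf{k}_n(x) \rangle = o(\sigma_n^2(x))$. For the kernel $k(x,x') = c_n \min(x,x')$ on the equispaced grid, $\mathbf{K}_{nn}$ admits a tractable spectral decomposition with eigenvalues $\mu_k \asymp c_n n / k^2$ and sinusoidal eigenvectors, which lets one compute the tail sums $\sum_{j>m}\eta_j(\cdot)$ defining $\mathbf{r}_m(x)$ explicitly. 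The threshold $m \gg n^{\frac{1}{1+2\gamma}(\frac{2+\alpha}{1+\alpha})}$ is precisely calibrated so that all three conditions hold uniformly in the relevant H\"older class; these bounds parallel and strengthen those used in the proof of Theorem \ref{prop:contraction_BM}. Applying Proposition \ref{prop:control_remainders_implies_coverage} then reduces the analysis to the limiting behaviour of the full-posterior coverage.

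Using the Gaussianity of the posterior mean under $P_{f_0}$, the full-posterior coverage can be rewritten as
\begin{equation*}
P_{f_0}(f_0(x) \in C_n^\delta) = P\!\left(\left|Z + \frac{b_n(x)}{t_n(x)}\right| \leq z_{1-\delta}\,\frac{\sigma_n(x)}{t_n(x)}\right),\qquad Z \sim N(0,1).
\end{equation*}
Using the spectral decomposition of $\mathbf{K}_{nn}$ (or equivalently a tridiagonal-inverse / Markov-property calculation specific to Brownian motion), one obtains $\sigma_n^2(x), t_n^2(x) \asymp n^{-1/(1+2\gamma)}$ and $|b_n(x)| \lesssim n^{-\min(\alpha,\gamma)/(1+2\gamma)}$, with matching lower bounds available on a suitable subclass of $C^\alpha$. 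The sharp quantitative input, following \cite{SV15}, is the identity $t_n^2(x)/\sigma_n^2(x) \to 1/2$, which reflects the fact that the pointwise marginal of the Brownian motion posterior is not of standard Bernstein--von Mises form.

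Combining these ingredients yields the three cases. For $\alpha > \gamma$, the bias term satisfies $b_n(x)/t_n(x) \to 0$, so the limit is $P(|Z| \leq z_{1-\delta}\sqrt{2}) = P(|N(0,1/2)| \leq z_{1-\delta}) = p_\delta$. For $\alpha = \gamma$, the ratio $b_n(x)/t_n(x)$ is of constant order; by choosing appropriately rescaled piecewise-linear or bump-like truths $f_0 \in C^\alpha$ one can arrange $b_n(x)/t_n(x) \to c$ for any $c \in [0,\infty)$, producing any coverage limit in $(0, p_\delta]$. For $\alpha < \gamma$, taking $f_0$ to be a local H\"older extremiser at $x$ drives $b_n(x)/\sigma_n(x) \to \infty$, collapsing coverage to $0$. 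The main technical obstacle will be pinning down the exact constant in $t_n^2(x)/\sigma_n^2(x) \to 1/2$ and the sharp pointwise bias asymptotics for the rescaled Brownian motion kernel, together with constructing explicit worst-case truths in cases (ii) and (iii); the former requires a careful Markov-property / tridiagonal argument, and the latter a controlled perturbation of a generic base function by a localised bump of appropriate H\"older scale.
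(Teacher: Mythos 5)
Your proposal follows essentially the same route as the paper: control the rank-gap terms via the explicit eigendecomposition of $\mathbf{K}_{nn}$ (the paper's Lemmas \ref{prop:control_variational_remainders} and \ref{prop:var_stats_same_as_full_stats}) so that $b_m,t_m^2,\sigma_m^2$ match their full-posterior counterparts up to $(1+o(1))$ under $m\gg n^{\frac{1}{1+2\gamma}(\frac{2+\alpha}{1+\alpha})}$, then write the coverage as a Gaussian probability with mean $b_n(x)/\sigma_n(x)$ and variance $t_n^2(x)/\sigma_n^2(x)$ and invoke the pointwise asymptotics of \cite{SV15}, in particular $t_n^2(x)/\sigma_n^2(x)\to 1/2$, to obtain the three regimes. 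One small slip: the correct orders are $t_n^2(x)\asymp\sigma_n^2(x)\asymp n^{-2\gamma/(1+2\gamma)}$ with $|b_n(x)|\lesssim n^{-\alpha/(1+2\gamma)}$ (not $n^{-1/(1+2\gamma)}$), which is precisely what places the regime change at $\alpha=\gamma$ rather than at $\alpha=1/2$.
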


Theorem \ref{thm:coverage} provides exact expressions for the asymptotic coverage when the credible interval $C_m^\delta$ has diameter $O(n^{-\frac{\alpha \wedge \gamma}{1+2\gamma}})$. In the undersmoothing case $\alpha > \gamma$, the $1-\delta$ SGPR credible sets are \textit{conservative} from a frequentist perspective, i.e. their coverage converges to a value strictly between $1-\delta$ and 1. For instance if $\delta = 0.05 \,(0.1)$, the 95\% (90\%) credible set will have asymptotic coverage 99.4\% (98.0\%), indicating one does not want to enlarge such credible sets if using enough inducing variables. A desired asymptotic coverage can also be achieved by targetting the credibility according to the formula in Theorem \ref{thm:coverage}(i). Reducing $\gamma$ towards zero will generally ensure coverage for H\"older functions, but this will also increase the size of the credible intervals to size $O(n^{-\frac{\gamma}{1+2\gamma}})$, making them less informative. Note that the convergence in (i) is uniform over $C^\alpha$-balls of fixed radius.

In the oversmoothing case $\alpha<\gamma$, the posterior variance underestimates the actual uncertainty and so the credible interval is too narrow, giving overconfident (bad) UQ for many H\"older functions. In the correct smoothing case $\alpha = \gamma$, where we obtain the minimax optimal contraction rate $n^{-\frac{\alpha}{1+2\alpha}}$, coverage falls between these regimes - it does not fully tend to zero, but one can find a function whose coverage is arbitrarily bad, i.e. close to zero.

The best scenario thus occurs when the prior slightly undersmooths the truth, in which case the SGPR credible interval will have slightly conservative coverage but its width $O(n^{-\frac{\gamma}{1+2\gamma}})$ is not too much larger than the minimax optimal size $O(n^{-\frac{\alpha}{1+2\alpha}})$. Our results match the corresponding ones for the full computationally expensive posterior and the main messages are the same: undersmoothing leads to conservative coverage while oversmoothing leads to poor coverage \cite{SV15}.

\begin{remark}[Kullback-Leibler]\label{rem:KL}
If the number of inducing variables grows like $n^{\frac{1}{1+2\gamma}(\frac{2+\alpha}{1+\alpha})} \ll m \ll n^{\frac{2}{1+2\gamma}}$, then the conditions of Theorems \ref{prop:contraction_BM} and \ref{thm:coverage} are met, but the KL-divergence between the SGPR and the true posterior tends to infinity as $n\to\infty$ (Lemma \ref{lem:KL_div_form} in the supplement). Thus one does not need the SGPR to be an asymptotically exact approximation of the posterior to get similarly good frequentist pointwise estimation and UQ. In particular, one can take $m$ polynomially smaller in $n$ than is required for the KL-divergence to vanish, see \cite{BRW19} for related bounds and discussion.
\end{remark}

While \cite{BRW19} consider approximation quality as measured by the KL-divergence between the SGPR and true posterior, we consider here the different question of whether the SGPR behaves well for pointwise inference, even when it is not a close KL-approximation.
The data-generating setup in \cite{BRW19} is also not directly comparable to ours. They take expectations over both the data $(x,y)$ and prior on $f$, so their results are `averaged' over the prior, which downweights the effect of sets of small prior probability. In contrast, the frequentist guarantees provided here hold assuming there is a true function $f_0$ which generates the data according to \eqref{eq:regression_model} (Assumption \ref{assumption}), meaning that one seeks to understand the performance of the method for any fixed $f_0$, including worst-case scenarios. Such setups can lead to very different conclusions, see e.g. Chapters 6.2-6.3 in \cite{GV17}.


\section{Connections with other Gaussian process priors}\label{sec:other_GPs}

The rescaled Brownian motion (rBM) prior we consider here can be thought of as an approximation to the Mat\'ern process of regularity $\gamma \in (0,1]$ since they both model $\gamma$-smooth functions. The rescaling factor $c_n = (n+1/2)^\frac{1-2\gamma}{1+2\gamma}$ in the covariance function $k(x,x') = c_n (x \wedge x')$ plays the same role as the lengthscale parameter for the Mat\'ern and calibrates the GP smoothness. We thus expect similar theoretical results and conclusions to also hold for the Mat\'ern process, which seems to be the case in practice, as we verify numerically in Section \ref{sec:simulations}. Indeed, similar UQ properties to the rBM posterior are conjectured to hold for the full Mat\'ern posterior \cite{YBP17}, in particular that the coverage of credible sets satisfies similar qualitative conclusions to the three cases in Theorem \ref{thm:coverage}.

On a more technical level, the success of GPs in nonparametric estimation is known to depend on their sample smoothness, as measured through their small ball probability \cite{VZ07,VZ08}. The posteriors based on both GPs converge to a $C^\alpha$-truth at rate $n^{-\frac{\alpha \wedge \gamma}{2\gamma+1}}$ in $L^2$-loss (\cite{VZ07} for rBM, \cite{VZ11} for Mat\'ern), indicating the closeness of their small-ball asymptotics and hence that both GPs distribute prior mass similarly over their supports. This gives a more quantitative notion of the similarities of these GPs.


Theoretical guarantees do exist for the Mat\'ern SGPR. Theorem 3.5 of \cite{VSSB22} nicely establishes that in a bandit-like setting and if the true $f_0$ is in the prior reproducing kernel Hilbert space $\mathcal{H}$ (roughly $\alpha = \gamma + 1/2$), then inflating credible sets by a factor depending on the rank gap and an upper bound for $\|f_0\|_{\mathcal{H}}$ guarantees coverage. 
Our parallel results suggest that this inflation is not necessary, since SGPR credible sets in this case will already be conservative. However, since our proofs rely on explicit expressions available for rBM but not for the Mat\'ern, it seems difficult to extend our approach to the different setting considered in \cite{VSSB22} (and vice-versa); other techniques are required. Our results can thus be viewed as a guide as to what to expect when using a Mat\'ern SGPR and provide a different perspective reinforcing the main messages of \cite{VSSB22}, namely that SGPRs can provide reliable, if conservative, UQ.

On the other hand, the squared exponential kernel seems to behave qualitatively differently in numerical simulations, giving different coverage behaviour in the cases considered in Theorem \ref{thm:coverage}, see Section \ref{sec:simulations}. This is due to the difference in the smoothness of the sample paths, with squared exponential prior draws being far smoother, in particular analytic. This leads to somewhat different UQ behaviour for the true posterior \cite{HS21}. Rescaled Brownian motion is a less good approximation for the squared exponential kernel than for the Mat\'ern, and one must thus be cautious about transferring the messages derived here to the squared exponential.

\section{Numerical simulations}\label{sec:simulations}

We next investigate empirically the applicability of our theoretical results in finite sample sizes and whether the conclusions extend to related settings, such as different designs and other GP priors. We consider various nonparametric regression settings of the form \eqref{eq:regression_model} with $C^\alpha$-smooth truths and $\gamma$-smooth priors. We compute both the full posterior (columns marked GP) and SGPR (marked SGPR) with $m\ll n$ eigenvector inducing variables for $f(x_0)$ at a fixed point $x_0 \in \mathcal{X}$, and report the root-mean square error (RMSE), negative log predictive density (NLPD), and the length and coverage of the 90\%-credible intervals of the form \eqref{eq:credible_set} (i.e. $\delta = 0.1$), see Section \ref{sec:additional_simulations} in the supplement for definitions. The true noise variance in \eqref{eq:regression_model} is taken to be $\sigma^2 =1$, but is considered unknown and is estimated by maximizing the log-marginal likelihood as usual. We ran all simulations 500 times and report average values and their standard deviations when relevant. Simulations were run on a 2 GHz Quad-Core Intel Core i5 processor on a 2020 Macbook Pro with 16GB of RAM.

\textbf{Settings of the theoretical results.}
We consider the setting of our theoretical results in Section \ref{sec:results} with $\mathcal{X} = [0,1]$ and $x_i = i/(n+1/2)$ for $i=1,\dots,n$. To generate the data, we take $f_0(x) = |x - 0.5|^\alpha$, which is exactly $\alpha$-Hölder at $x_0=0.5$, and investigate pointwise inference at $x_0 =0.5$ for different choices of $\alpha$. As Gaussian  priors, we consider (i) rescaled Brownian motion of regularity $\gamma$ described in Section \ref{sec:results}, (ii) the Matérn kernel with parameter $\gamma$ and (iii) the square exponential kernel $k(x, x') = \exp\{-\frac{1}{2\ell_n^2}(x - x')^2\}$, where the lengthscale $\ell_n = n^{-\frac{1}{1+2\gamma}}$ is chosen to be appropriate for estimating $\gamma-$Hölder smooth functions \cite{VZ07}. We take $m^* := n^{\frac{1}{1+2\gamma}\frac{2+\alpha}{1+\alpha}}$ inducing variables for the SGPR. Results are given in Table \ref{Tab:cov_all_gps}: {\bf Fixed Design, }$\boldsymbol{n = 1000}$. We also consider the related case of uniform random design where $x_i \sim U(0,1)$ (Table \ref{Tab:cov_all_gps}: {\bf Random Design, }$\boldsymbol{n = 500}$).

\begin{table}[ht]
\centering
\begin{tabular}{c|cc|cc|cc|cc}
   {\bf Prior} & \multicolumn{2}{c}{\bf Coverage} & \multicolumn{2}{c}{\bf Length} & \multicolumn{2}{c}{\bf RMSE} & \multicolumn{2}{c}{\bf NLPD}\\\hline
GP   & {SGPR} & {GP} & {SGPR} & {GP} & {SGPR} & {GP} & {SGPR} & {GP} \\  \hline
&\multicolumn{8}{l}{{\bf Fixed Design: } $\boldsymbol{n = 1000, (\alpha, \gamma) = (1.0, 0.5)}$}    \\ \hline
{rBM}     & 0.98 & 0.98 & 0.41                 & 0.41                 & 0.09 & 0.09 & -0.90 {\small (0.21)} & -0.90 {\small (0.21)}\\
{Matérn}  & 0.98 & 0.98 & 0.49                 & 0.49                 & 0.10 & 0.10 & -0.68 {\small (0.16)} & -0.68 {\small (0.16)}\\
{SE}      & 0.91 & 0.91 & 0.65                 & 0.65                 & 0.19 & 0.19 & -0.28 {\small (0.30)} & -0.28 {\small (0.30)}\\ \hline
&\multicolumn{8}{l}{{\bf Fixed Design: } $\boldsymbol{n = 1000, (\alpha, \gamma) = (0.5, 0.5)}$}    \\ \hline
{rBM}     & 0.74 & 0.74 & 0.41                 & 0.41                 & 0.18 & 0.18 & -0.09 {\small (0.35)} & -0.09 {\small (0.35)}\\
{Matérn}  & 0.84 & 0.84 & 0.49                 & 0.49                 & 0.17 & 0.17 & -0.40 {\small (0.32)} & -0.40 {\small (0.32)}\\
{SE}      & 0.88 & 0.88 & 0.65                 & 0.65                 & 0.21 & 0.21 & -0.21 {\small (0.31)} & -0.21 {\small (0.31)}\\\hline
         &\multicolumn{8}{l}{{\bf Random Design: }{$\boldsymbol{n = 500, (\alpha, \gamma) = (1.0, 0.5)}$} }\\ \hline
{rBM}     & 0.98 & 0.98 & 0.49 {\small (0.02)} & 0.49 {\small (0.02)} & 0.11 & 0.11 & -0.65 {\small (0.15)} & -0.65 {\small (0.15)} \\
{Matérn}  & 0.96 & 0.96 & 0.59 {\small (0.03)} & 0.59 {\small (0.03)} & 0.13 & 0.13 & -0.55 {\small (0.14)} & -0.55 {\small (0.14)}\\
{SE}      & 0.92 & 0.92 & 0.76 {\small (0.08)} & 0.76 {\small (0.08)} & 0.20 & 0.20 & -0.02 {\small (0.25)} & -0.02 {\small (0.25)}\\ \hline
&\multicolumn{8}{l}{{\bf Random Design: }{$\boldsymbol{n = 500, (\alpha, \gamma) = (0.3, 0.5)}$} }\\ \hline
{rBM}     & 0.25 & 0.25 & 0.49 {\small (0.02)} & 0.49 {\small (0.02)} & 0.37 & 0.37 & 2.23 {\small (0.86)}  & 2.23 {\small (0.86)} \\
{Matérn}  & 0.47 & 0.47 & 0.59 {\small (0.02)} & 0.59 {\small (0.02)} & 0.34 & 0.34 & 0.91 {\small (0.66)}  & 0.91 {\small (0.66)}\\
{SE}      & 0.71 & 0.71 & 0.77 {\small (0.08)} & 0.77 {\small (0.08)} & 0.31 & 0.31 & 0.36 {\small (0.52)}  & 0.36 {\small (0.52)}\\
  \hline
\end{tabular}\vspace{2mm}\\
\caption{Comparison of SGPR and full posterior (marked GP) for 90\% pointwise credible intervals for different values of $(\alpha, \gamma)$. For the SGPR we use $m^* := n^{\frac{1}{1+2\gamma}\frac{2+\alpha}{1+\alpha}}$ inducing variables, corresponding to 178, 316, 106 and 244 in the four $(n,\alpha, \gamma)$ regimes above (top-to-bottom).}
\label{Tab:cov_all_gps}
\end{table}

Theorem \ref{thm:coverage} predicts that for 90\% credible sets ($\delta = 0.1$), $m\gg m^*$ and $\alpha > \gamma$, the coverage should converge to $p_{0.1} = P(|N(0,1/2)| < z_{0.9})$ = 0.98. We see that for $n=1000$, the observed coverages for both the full posterior and SGPR based on the Brownian motion prior are very close to this predicted value, and are indeed conservative. In the oversmoothing case $\alpha < \gamma$, coverage is poor and far below the nominal level, while in the correct smoothing case ($\alpha = \gamma$), coverage is moderately below the nominal level. We see that the asymptotic theory is applicable for reasonable samples sizes. 

The Matérn process behaves qualitatively similarly to Brownian motion in these three cases, though the (still conservative) limiting coverage in the undersmoothing case is predicted to be slightly different \cite{YBP17}, as reflected in Table \ref{Tab:cov_all_gps}. On the other hand, the square exponential behaves differently in all cases and generally has wider sets with this lengthscale choice. The wider credible intervals and larger RMSE suggest greater bias of the posterior mean. While rescaled Brownian motion seems a reasonable guide for the Mat\'ern, it does not seem so for the squared exponential, see Section \ref{sec:other_GPs}.
 
For all three GP priors, the reported metrics are practically indistinguishable between the SGPR and the full GP, confirming that $m \gg n^{\frac{1}{1+2\gamma}\frac{2+\alpha}{1+\alpha}}$ inducing variables seems sufficient to obtain virtually the same pointwise credible intervals as the full posterior.
However, since $m^* \ll n^{\frac{2}{1+2\gamma}}$ for any $\alpha>0$, the KL-divergence between the SGPR and the full posterior grows to infinity as $n\to\infty$ in the Brownian motion case (Lemma \ref{lem:KL_div_form}), and hence the SGPR will differ from the posteriors in some ways. This reflects that one does not necessarily need to fully approximate the posterior in order to match its pointwise inference properties.
Note that for fixed design, the length of the credible sets is not random since it depends only on the features $x_1,\dots,x_n$, and so no standard errors are reported.



\textbf{Other settings.}
We turn to simulation settings not covered by our theory. Given the above discussion, we consider only the Mat\'ern process in dimension $d=10$ and the corresponding SGPR with $m = m_d^* := n^{\frac{d}{d+2\gamma}}$. As features, consider random design with (i) $x_{i} \sim \mathcal{U}([-1/2, 1/2]^d)$ and (ii) $X_{i} \sim \mathcal{N}_d(0, \Sigma_\rho)$ for $[\Sigma_\rho]_{ij} = 1$ for $i = j$ and $[\Sigma_\rho]_{ij} = \rho$ otherwise. To better reflect correlation in real-world data, we also consider a semi-synthetic dataset with real features but simulated responses. As features, we took the first $d=10$ columns and randomly sampled $n = 2000$ rows from a Korean meteorological dataset \cite{temperatureDat}. In all cases, we consider estimating the  $\alpha-$Hölder function $f_0(x) =\|x - x_0\|^\alpha$ at $x_0 = 0$, and the resulting pointwise 90\%-credible sets at $x_0$ coming from the Matérn SGPR. Results are presented in Table \ref{Tab:mat_d2}, where we see that for $m = m_d^*$ the performance of the pointwise credible sets from the SGPR matches that of the full posterior in each design. Due to the higher dimension, credible intervals are less confident (wider) and have larger coverage. 
  
\begin{table}[ht]
\centering
{\bf Multidimensional Random Design, } $\boldsymbol{n = 1000}$ \\
\resizebox{\columnwidth}{!}{
\begin{tabular}{cc|cc|cc|cc|cc}
\hline
\multicolumn{2}{c}{\bf Design} & \multicolumn{2}{c}{{\bf Smoothness}} & \multicolumn{2}{c}{\bf Coverage} & \multicolumn{2}{c}{\bf Length} & \multicolumn{2}{c}{\bf NLPD}\\  \hline
Type & $\rho$ & $\alpha$ & $\gamma$ & SGPR & GP & SGPR & GP & SGPR & GP  \\ 
  \hline
Uniform &  & 0.5 & 0.5 & 0.96 & 0.96 & 1.46 {\small (0.03)} & 1.46 {\small(0.03)} & 0.96 {\small (0.26)} & 0.96 {\small (0.26)} \\ 
  Gaussian & 0.0 & 0.7 & 0.5 & 0.99 & 0.99 & 2.17 {\small (0.05)} & 2.17 {\small (0.05)} & 1.34 {\small (0.11)} & 1.34 {\small (0.11)} \\ 
  Gaussian & 0.2 & 0.9 & 0.5 & 1.00 & 1.00 & 2.13 {\small (0.06)} & 2.13 {\small (0.06)} & 1.90 {\small (0.14)} & 1.90 {\small (0.14)} \\ 
  Gaussian & 0.5 & 1.1 & 0.5 & 1.00 & 1.00 & 2.03 {\small (0.06)} & 2.03 {\small (0.06)} & 1.36 {\small (0.12)} & 1.36 {\small (0.12)} \\ 
   \hline
\end{tabular} 
}
\begin{center}
{\bf Semi-synthetic Data, } $\boldsymbol{n = 2000}$ \\
\begin{tabular}{cc|cc|cc|cc|cc}
\hline
\multicolumn{2}{c}{{\bf Smoothness}} & \multicolumn{2}{c}{\bf Coverage} & \multicolumn{2}{c}{\bf Length} & \multicolumn{2}{c}{\bf RMSE} & \multicolumn{2}{c}{\bf NLPD}\\  \hline
$\alpha$ & $\gamma$ & SGPR & GP & SGPR  & GP & SGPR & GP & SGPR & GP  \\ 
  \hline
1.0 & 0.5 & 1.00 & 1.00 & 2.29 & 2.29 & 0.16 & 0.16 & 3.22 {\small (0.47)} & 3.22 {\small (0.47)}\\
   \hline
\end{tabular}\vspace{2mm} \\\end{center}
\caption{
Comparison of SGPR and full posterior (GP) with Mat\'ern prior for 90\% pointwise credible intervals for different values of $(\alpha, \gamma)$. The SGPR uses 534 and 1002 inducing variables in the first four rows and fifth row, respectively.}\label{Tab:mat_d2}
\end{table}

\section{Discussion}\label{sec:discussion}

In this work, we established the frequentist asymptotic behaviour of pointwise credible intervals coming from sparse variational Gaussian process regression (SGPR) with eigenvector inducing variables based on a rescaled Brownian motion prior. We showed that if the prior undersmooths the truth and with enough inducing variables, SGPR credible sets can provide reliable, though conservative, uncertainty quantification, with the coverage converging to a value strictly between the nominal level and one. If the prior oversmooths the truth, UQ can be poor. We further showed that it is not necessary for the SGPR to converge to the true posterior in KL-divergence to have similar behaviour of the credible sets. Our results suggest that properly calibrated SGPRs can perform reliable UQ. We verified these conclusions in simulations and discussed connections with other GPs.

Despite the widespread use of SGPR, there are still relatively few theoretical guarantees for their use, particularly for UQ. Our work provides some new results in this direction, but most of this research area is still wide open. A key step would be to prove similar results for the most commonly used GP priors, notably the Mat\'ern and squared exponential. Similarly, one would like to extend these results to other choices of inducing variables, for instance the eigenfunctions of the kernel operator. While our results give some intuition of what one can expect, they rely on a careful analysis of Brownian motion with fixed design, and new ideas and techniques will be needed for these other settings.

It is also unclear what minimal number of inducing variables is needed to get good pointwise UQ. When $\alpha = \gamma$, the threshold for minimax convergence rates for \textit{estimation} in $L^2$ is $n^{\frac{1}{1+2\alpha}}$ \cite{NSZ22a}, which is smaller than our bound $n^{\frac{1}{1+2\alpha}(\frac{2+\alpha}{1+\alpha})}$, see Section \ref{sec:additional_simulations} in the supplement for some related numerical simulations. Our results are also \textit{non-adaptive} since they assume fixed prior smoothness $\gamma$, whereas one often selects $\gamma$ in a data-driven way, for instance by maximizing the marginal likelihood or evidence lower bound (ELBO). However, \textit{adaptive} UQ is a subtle topic which typically requires further assumptions (e.g. self-similarity) on the true generative function to even be possible \cite{SVZ15}, and extending our results to such settings will require significant technical and conceptual work.

{\bf Acknowledgements.}\, We are grateful to four anonymous reviewers for helpful comments that improved the manuscript. The authors would like to thank the Imperial College London-CNRS PhD Joint Programme for funding which supported Luke Travis in his studies. 
\bibliography{references}{}
\bibliographystyle{abbrvnat}

\newpage
\section*{Supplementary material to ``Pointwise uncertainty quantification for sparse variational Gaussian process regression with a Brownian motion prior''}
\appendix

\section{Proofs}\label{sec:proofs}
Throughout this section, for sequences $a_n$ and $b_n$, we will write $a_n \lesssim b_n$ if there exists $C > 0$ such that $a_n \leq C b_n$ for $n$ large enough, and we will write $a_n \asymp b_n$ if both $a_n \lesssim b_n$ and $b_n \lesssim a_n$. Recall that for simplicity, we take the noise variance $\sigma^2 = 1$ in the regression model \eqref{eq:regression_model}.

\subsection{General results on pointwise inference}

\begin{proof}[Proof of Lemma \ref{lem:var_to_full_relationships}]\label{proof:var_to_full_relationships}
	Using the definitions \eqref{eq:post_mean_var} and \eqref{eq:var_post_cov} of the posterior and SGPR means, we have
	$
	\nu_m(x) = \nu_n(x) - \langle \mathbf{r}_m(x), \by\rangle,
	$
	from which \eqref{eq:var_mean_vs_full_mean} follows on taking the $E_{f_0}$-expectation. Next, recall that $\nu_n(x) = \kn(x)^T\left[\sum_{j =1}^n \eta_j \mathbf{v}_j \mathbf{v}_j^T\right] \by$, and thus
\begin{align*}
t_n^2(x) = \textrm{Var}(\nu_n(x)) &=   \left\|\left[\sum_{j =1}^n \eta_j \mathbf{v}_j \mathbf{v}_j^T\right] \kn(x)\right\|^2  =  \left\|\left[\sum_{j =1}^m \eta_j \mathbf{v}_j \mathbf{v}_j^T\right] \kn(x) + \mathbf{r}_m(x)\right\|^2 \\
&=  \left\|\left[\sum_{j =1}^m \eta_j \mathbf{v}_j \mathbf{v}_j^T\right] \kn(x) \right\|^2 + \|\mathbf{r}_m(x)\|^2 \\
&= t_m^2(x) + \|\mathbf{r}_m(x)\|^2,    
\end{align*}
from which \eqref{eq:var_vofmean_vs_full_vofmean} follows. Finally,
\begin{align*}
	\sigma^2_n(x) = k_n(x,x) &= k(x,x) - \mathbf{k}_n(x)^T\left[\sum_{j =1}^n \eta_j \mathbf{v}_j \mathbf{v}_j^T\right]\mathbf{k}_n(x)	\\
	&=k(x,x) - \mathbf{k}_n(x)^T\left[\sum_{j =1}^m \eta_j \mathbf{v}_j \mathbf{v}_j^T\right]\mathbf{k}_n(x) - \mathbf{k}_n(x)^T\left[\sum_{j =m+1}^n \eta_j \mathbf{v}_j \mathbf{v}_j^T\right]\mathbf{k}_n(x),
\end{align*}
which gives \eqref{eq:var_var_vs_full_var}.
	
\end{proof}

\begin{proof}[Proof of Proposition \ref{prop:contraction_rate_general}]
We will prove this for the full posterior, with the expressions for the SGPR following the same. Denoting by $E_{\Pi | Y}$ the expectation with respect to the posterior,	 Markov's inequality yields,
	\begin{align*}
		\Pi\left(|f(x) - f_0(x)| > M_n\eps_n | Y \right) \leq \frac{E_{\Pi|Y}\left( f(x) - f_0(x) \right)^2}{M_n^2 \eps_n^2}.
	\end{align*}
	Moreover,
	$$
E_{\Pi|Y}\left( f(x) - f_0(x) \right)^2 = \left(\nu_n(x) - f_0(x)\right)^2 + \textrm{Var}_{\Pi | Y}\left(f(x)\right) = \left(\nu_n(x) - f_0(x)\right)^2 + \sigma_n^2(x).
	$$
	Now,
	$$
	E_{f_0}\left[E_{\Pi|Y}\left( f(x) - f_0(x) \right)^2\right] = E_{f_0}\left[ (\nu_n(x) - f_0(x) )^2 \right] + \sigma_n^2(x) = b_n^2(x) + t_n^2(x) + \sigma_n^2(x).
	$$
	Thus for $\eps_n^2(x) \asymp b_n^2(x) + t_n^2(x) + \sigma_n^2(x)$ and any $M_n \rightarrow \infty$, $E_{f_0} \Pi\left(|f(x) - f_0(x)| > M_n\eps_n | Y \right) \rightarrow 0$.
\end{proof}

\begin{proof}[Proof of Proposition \ref{prop:control_remainders_implies_coverage}]
We have,
$$
P_{f_0}(f_0(x) \in C_m^\delta) = P_{f_0}( |\nu_m(x) - f_0(x)| \leq z_{1-\delta}\sigma_m(x)) = P_{f_0}(|V_m| \leq z_{1-\delta}),
$$
for $V_m \sim \mathcal{N}\left(\frac{b_m(x)}{\sigma_m(x)}, \frac{t_m^2(x)}{\sigma_m^2(x)} \right)$. Now, if $\langle \mathbf{r}_m(x), \mathbf{f}_0 \rangle = o(b_n(x))$, we have $b_m(x) = b_n(x)(1+o(1))$ and further, by expressions \eqref{eq:var_vofmean_vs_full_vofmean} and \eqref{eq:var_var_vs_full_var} we have $\frac{t_m^2(x)}{\sigma_m^2(x)} \leq \frac{t_n^2(x)}{\sigma_n^2(x)}$, and thus,
$$
\liminf_{n\rightarrow \infty} P_{f_0}\left( |V_m| \leq z_{1-\delta} \right) \geq \liminf_{n\rightarrow \infty} P_{f_0}\left( |\mathbf{V}_n| \leq z_{1-\delta} \right) = \liminf_{n\rightarrow \infty} P_{f_0} \left( f_0(x) \in C_n^\delta \right).
$$
If in addition, we have $\|\mathbf{r}_m(x)\|^2 = o(t_n^2(x))$ and $\langle \mathbf{r}_m(x), \kn(x) \rangle = o(\sigma_n^2(x))$ then by expressions \eqref{eq:var_vofmean_vs_full_vofmean} and \eqref{eq:var_var_vs_full_var} we have $t_m^2(x) = t_n^2(x)(1+o(1))$ and $\sigma_m^2(x) = \sigma_n^2(x)(1+o(1))$ and thus,
$$
\lim_{n\rightarrow \infty} P_{f_0}\left( |V_m| \leq z_{1-\delta} \right) = \lim_{n\rightarrow \infty} P_{f_0}\left( |\mathbf{V}_n| \leq z_{1-\delta} \right) = \lim_{n\rightarrow \infty} P_{f_0} \left( f_0(x) \in C_n^\delta \right).
$$
\end{proof}

\subsection{Rescaled Brownian motion prior}

Recall that we take regularly spaced design points $x_i = \frac{i}{n+1/2}$, $i = 1, \dots, n$, and that we consider the rescaled Brownian motion prior, which is a mean-zero Gaussian process with covariance kernel $k(x, x') = c_n(x \wedge y) = c_n \min(x,y)$ with $c_n = (n+1/2)^\frac{1-2\gamma}{1+2\gamma}$ for $\gamma >0$. For notational convenience, we write $N = (n+1/2)/c_n = (n+1/2)^{1-\frac{1-2\gamma}{1+2\gamma}}$. The kernel matrix $\mathbf{K}_{nn}$ evaluated at the sample points then equals
$$
\mathbf{K}_{nn} = c_n\left(\begin{matrix}
	1/(n+1/2) & 1/(n+1/2) &\dots & 1/(n+1/2) \\
	1/(n+1/2) & 2/(n+1/2) &\dots &2/(n+1/2) \\
	\vdots &\vdots & \ddots & \vdots \\
	1/(n+1/2) & 2/(n+1/2) &\cdots  &n/(n+1/2)
\end{matrix}
 \right) = \frac{1}{N}\left(\begin{matrix}
1 & 1 &\dots & 1 \\
	1 & 2 &\dots &2 \\
	\vdots &\vdots & \ddots & \vdots \\
	1 & 2 &\cdots &n
\end{matrix}
 \right).
$$
For this specific choice of prior and design, we can compute fairly explicit expressions for the eigenvalues and eigenvectors of $\mathbf{K}_{nn}$, which will in turn allow us to obtain the precise pointwise asymptotics of the VB method needed for our results.

\begin{lemma}[Eigenvalues and eigenvectors]\label{lem:evals_and_evecs}
	The eigenvalues $(\mu_j)_{j=1}^n$ of the covariance matrix $\mathbf{K}_{nn}$ are given by
	$$
	\mu_j = \frac{1}{2N(1-\cos(\psi_j))} \qquad \text{with} \qquad  \psi_j = \frac{j-1/2}{n+1/2}\pi
	$$
	for $j = 1,\dots, n.$
	Moreover, their size satisfies
	$$
\mu_j \asymp \frac{1}{N\psi_j^2} \asymp  (n+1/2)c_n \frac{1}{(j-1/2)^2}.
$$
	The corresponding orthonormal eigenvectors $\mathbf{v}_1, \dots, \mathbf{v}_n$, where $\mathbf{v}_j = (v_j^1, \dots, v_j^n)^T$, are given by
	$$
	v_j^l = \frac{2\sin(l\psi_j)}{\sqrt{2n + 1}}.
	$$
\end{lemma}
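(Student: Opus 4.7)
The proof reduces to diagonalising the integer matrix $M = N \mathbf{K}_{nn}$ with entries $M_{ij} = \min(i,j)$, since $\mathbf{K}_{nn} = M/N$ shares the eigenvectors of $M$ and has eigenvalues equal to those of $M$ divided by $N$. The key observation is the Cholesky-like factorisation $M = LL^T$ where $L_{ij} = \mathbf{1}_{i\ge j}$ is the lower triangular matrix of $1$'s. Since $L^{-1}$ has $1$'s on the diagonal and $-1$'s on the subdiagonal, $M^{-1} = L^{-T}L^{-1}$ is symmetric tridiagonal with diagonal entries $(2,2,\dots,2,1)$ and $-1$'s on the off-diagonals. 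The crucial structural feature is the last diagonal entry being $1$ rather than $2$, an artefact of the boundary of $L^{-T}$.

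Next I would find the eigenstructure of $M^{-1}$ by solving the three-term recurrence
$$-v_{l-1} + 2v_l - v_{l+1} = \lambda v_l, \qquad 1 < l < n.$$
The general solution is $v_l = A\cos(l\theta) + B\sin(l\theta)$ with $\lambda = 2 - 2\cos\theta$. The first row equation $2v_1 - v_2 = \lambda v_1$ is equivalent to the boundary condition $v_0 = 0$, which forces $A = 0$, so $v_l = B\sin(l\theta)$. The modified last-row equation $-v_{n-1} + v_n = \lambda v_n$ becomes $\sin((n-1)\theta) = (2\cos\theta - 1)\sin(n\theta)$; using $\sin((n-1)\theta) = \sin(n\theta)\cos\theta - \cos(n\theta)\sin\theta$ together with the half-angle identities $1-\cos\theta = 2\sin^2(\theta/2)$ and $\sin\theta = 2\sin(\theta/2)\cos(\theta/2)$, this collapses after a short manipulation to $\cos((n+\tfrac12)\theta) = 0$. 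Hence $\theta = \psi_j = (j-\tfrac12)\pi/(n+\tfrac12)$ for $j=1,\dots,n$, giving $n$ distinct eigenvalues $\lambda_j = 2(1-\cos\psi_j)$ of $M^{-1}$. Inverting and dividing by $N$ yields $\mu_j = [2N(1-\cos\psi_j)]^{-1}$ as claimed.

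For the orthonormal eigenvectors, I would compute $\sum_{l=1}^n \sin^2(l\psi_j)$ via $\sum_{l=1}^n \cos(2l\psi_j) = \mathrm{Re}\bigl(e^{i(n+1)\psi_j}\sin(n\psi_j)/\sin\psi_j\bigr)$; the identity $(n+\tfrac12)\psi_j = (j-\tfrac12)\pi$ makes $\cos((n+\tfrac12)\psi_j) = 0$ and $\sin((n+\tfrac12)\psi_j) = \pm 1$, from which the sum evaluates to $-1/2$ and hence $\sum_{l=1}^n \sin^2(l\psi_j) = (2n+1)/4$, giving the normalising factor $2/\sqrt{2n+1}$. Finally, for the size estimate, since $\psi_j/2 \in (0,\pi/2)$, Jordan's inequality gives $\sin(\psi_j/2) \asymp \psi_j$ uniformly in $j$, so $1 - \cos\psi_j = 2\sin^2(\psi_j/2) \asymp \psi_j^2$; substituting $\psi_j = (j-\tfrac12)\pi/(n+\tfrac12)$ and using $(n+\tfrac12)/N = c_n$ gives $\mu_j \asymp (n+\tfrac12)c_n/(j-\tfrac12)^2$. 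The main obstacle is the nonstandard boundary at $l=n$: the perturbed diagonal entry shifts the allowed frequencies from the familiar Dirichlet spectrum $j\pi/(n+1)$ to the half-integer shifted $(j-\tfrac12)\pi/(n+\tfrac12)$, and the trigonometric simplification identifying this boundary condition with $\cos((n+\tfrac12)\theta) = 0$ is the key calculation.
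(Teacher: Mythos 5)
Your proof is correct. At its core it coincides with the paper's argument: both reduce the eigenproblem to the three-term recurrence $v_{l+1}-2v_l+v_{l-1}=-\lambda v_l$ with the boundary conditions $v_0=0$ and a perturbed last row, and both identify the spectrum through the same condition $\cos((n+\tfrac12)\theta)=0$; the normalisation $\sum_l \sin^2(l\psi_j)=(2n+1)/4$ and the size estimate via $1-\cos\psi\asymp\psi^2$ are likewise the same. Where you genuinely diverge is in how the reduction is made: the paper applies row operations directly to $\mathbf{K}_{nn}v=\mu v$, solves the recurrence with complex characteristic roots $\phi_\pm=e^{\pm i\psi}$, and supplements this with a separate lemma computing $|\mathbf{K}_{nn}-\mu\mathbf{I}_n|$ by a tridiagonal determinant recursion to confirm the eigenvalue condition. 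You instead factor $M=N\mathbf{K}_{nn}=LL^T$ with $L$ the lower-triangular matrix of ones, observe that $M^{-1}=L^{-T}L^{-1}$ is the discrete Laplacian with a Dirichlet condition at $l=0$ and a Neumann-type condition at $l=n$ (the lone diagonal entry $1$), and diagonalise that standard Sturm--Liouville-type problem with real trigonometric solutions. This buys you a cleaner, more conceptual derivation that dispenses with the auxiliary determinant lemma entirely: since the $n$ roots $\psi_j\in(0,\pi)$ give $n$ distinct eigenvalues of the $n\times n$ matrix, they must exhaust the spectrum, and no characteristic-polynomial verification is needed.
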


\begin{proof}
Let $v$ be an eigenvector of $\mathbf{K}_{nn}$ with corresponding eigenvalue $\mu$, so that $\mathbf{K}_{nn} v = \mu v$. Applying elementary row operations to this equation, we arrive at the following equivalent equations:
\begin{align*}
	\frac{1}{N} \left[\begin{matrix}
v^1 \\
v^2 \\
\vdots \\
v^{n-1} \\
v^n	
\end{matrix}
 \right] = \mu\left[\begin{matrix}
2v^1 - v^2 \\
2v^2 - v^1 - v^3 \\
\vdots \\
2v^{n-1} - v^{n-2} - v^n \\
	v^n - v^{n-1}
\end{matrix}
 \right].
\end{align*}
From this, one can see that to obtain a non-trivial eigenvector $v \neq 0$, we require $v^1 \neq 0$. We thus consider the unnormalized eigenvector $w=(w^1,\dots,w^n)^T \in \R^n$, for which we set $w^1 = 1$ for simplicity. Further define $w^0 = 0$ and $w^{-1} = -1$, so that the first $(n-1)$ equations in the last display are equivalent (after rearranging) to the recurrence relation
\begin{align}\label{eq:eigenvalue_condition}
	w^{j} &= \left(2- \frac{1}{\mu N}\right)w^{j-1} - w^{j-2}, \hspace{10mm} j = 1,\dots, n,
\end{align}
while the last equation gives the boundary condition
\begin{align}\label{eq:boundary_condition}
w^n &= \frac{\mu N}{\mu N - 1}w^{n-1}. 
\end{align}
The linear recurrence relation \eqref{eq:eigenvalue_condition} has solution
\begin{equation}
w^{j} = \frac{1}{\phi_+ - \phi_-}\left(\phi_+^j - \phi_-^j \right),	\label{eq:recurrence_sol}
\end{equation}
for
$$
\phi_{\pm} =1 - \frac{1}{2\mu N} \pm\frac{1}{2} \sqrt{\frac{1}{\mu^2 N^2} - \frac{4}{\mu N}}  =: x\pm iy,
$$
with $x = 1-\frac{1}{2\mu N}$ and $y = \frac{1}{2} \sqrt{\frac{4}{\mu N} - \frac{1}{\mu^2 N^2}}$ (computation of the eigenvalues below reveals that $\frac{1}{ \mu^2 N^2} - \frac{4}{\mu N} < 0$ for all eigenvalues $\mu$.) Note that \eqref{eq:recurrence_sol} provides the solution to \eqref{eq:eigenvalue_condition} for any arbitrary value of $\mu$, but that \eqref{eq:boundary_condition} will later be needed to ensure that $\mu$ is an eigenvalue of $\mathbf{K}_{nn}$.
Since $x^2+y^2 =1$, we can conveniently rewrite this in complex exponential form as
\begin{align}\label{eq:mu-psi}
\phi_{\pm} &= e^{\pm i\psi},	
\qquad \text{where} \qquad 
\cos\psi =\text{Re}(\phi_{\pm}) = x=1- \frac{1}{2\mu N},
\end{align}
and $\psi$ is an alternative parametrization of $\mu$. This form makes it easy to compute expressions for $\phi_\pm^j$, and hence evaluate \eqref{eq:recurrence_sol}. Indeed, we have
\begin{equation}\label{eq:eigenvect_sol}
w^j = \frac{1}{\phi_+ - \phi_-}(\phi_+^j - \phi_-^j) = \frac{\sin(j\psi)}{\sin\psi},
\end{equation}
giving the form of the unnormalized eigenvector $w = (1,w^2,\dots,w^n)^T$ corresponding to an eigenvalue $\mu$ via \eqref{eq:mu-psi}. To normalize the eigenvector, using that $\cos(2j\psi) = 1 - 2\sin^2(j\psi)$,
\begin{align*}
	\|w\|_2^2 = \sum_{j=1}^n (w^j)^2 = \sum_{j=1}^n \frac{\sin^2(j\psi)}{\sin^2\psi} & = \frac{1}{\sin^2\psi} \sum_{j = 1}^n\frac{1}{2}(1-\cos(2j\psi)) \\
	& = \frac{1}{4\sin^2\psi}\left(2n + 1 - \frac{\sin((2n+1)\psi)}{\sin\psi}\right),
\end{align*}
where in the last line we have used the Lagrange trigonometric identity that $\sum_{j=0}^n \cos(2j\psi) = \frac{1}{2} + \frac{\sin(2n+1)\psi)}{2\sin \psi}$ for any $\psi \neq 0 (\text{mod } \pi)$ (else it equals $n+1$). On computing the values $\psi$ corresponding to the eigenvalues in \eqref{eq:eigenvalue} below, one has $\sin((2n+1)\psi) = 2\sin((n+1/2)\psi)\cos((n+1/2)\psi) = 0$, and hence $\|w\|_2^2 = \frac{2n+1}{4\sin^2 \psi}$.
Thus the normalized eigenvector with $v = w/\|w\|_2$ has the required coordinates,
\begin{equation}
v^j = \frac{2\sin(j\psi)}{\sqrt{2n + 1}}.
 \label{eq:evector_form}	
\end{equation}

Turning to the computation of the eigenvalues, the boundary condition \eqref{eq:boundary_condition} defines whether $\mu$ is an eigenvalue. Substituting the solution \eqref{eq:eigenvect_sol} into \eqref{eq:boundary_condition}, any $\mu$ that satisfies
\begin{align*}
\frac{\sin(n\psi)}{\sin \psi} &= \frac{\mu N}{\mu N - 1}\frac{\sin((n-1)\psi)}{\sin\psi} ,
\end{align*}
or equivalently
$$	(\mu N - 1)\sin(n\psi)=\mu N\sin((n-1)\psi)\qquad \text{and} \qquad \sin \psi \neq 0,$$
is an eigenvalue of $\mathbf{K}_{nn}$. Indeed, if one approaches the problem in the traditional way, by computing the determinant of $(\mathbf{K}_{nn} - \mu \mathbf{I}_n)$ (see Lemma \ref{lem:determinant} below), then one has
\begin{align*}
	|\mathbf{K}_{nn} - \mu \mathbf{I}_n|  = \frac{(-1)^n}{\sin \psi}\left(\left[\mu^n - \frac{\mu^{n-1}}{N} \right]\sin (n\psi) - \mu^n\sin((n-1)\psi) \right) &= 0\\ 
	\Leftrightarrow \left[\mu - \frac{1}{N}\right]\sin(n\psi) - \mu N\sin((n-1)\psi) &= 0 \qquad \textrm{and} \qquad \sin\psi \neq 0\\
	\Leftrightarrow(\mu N - 1)\sin(n\psi) - \mu N \sin((n-1)\psi) &=0 \qquad \textrm{and} \qquad \sin\psi \neq 0, &
\end{align*}
exactly as above. Recalling that $\psi$ and $\mu$ are related by $\cos \psi = 1 - \frac{1}{2\mu N}$, this last expression is equivalent to
\begin{align*}
	0 &= \left(1-\frac{1}{\mu N}\right)\sin(n\psi) - \sin((n-1)\psi) \\
	&= \left(2\cos\psi - 1\right)\sin(n\psi) - \sin((n-1)\psi)  \\
	&= \sin((n+1)\psi) - \sin(n\psi)  \\
	&= 2\sin(\psi/2)\cos((n+1/2)\psi),
\end{align*}
where we have used the identity $2\cos A \sin B = \sin(A+B) - \sin(A-B)$ twice. By \eqref{eq:mu-psi}, the eigenvalues $\mu$ are only defined by $\cos \psi$, and hence we may restrict $\psi$ to $[-\pi,\pi)$ and look for solutions within this range. For $\psi$ in this range, the condition $\sin \psi \neq 0$ then implies $\sin (\psi/2) \neq 0$, and hence we need only look for solutions to the equation $\cos((n+1/2)\psi) = 0$. For $\psi \in[-\pi,\pi)$, these are given by
\begin{equation}\label{eq:eigenvalue}
\psi_j = \frac{j-1/2}{n+1/2}\pi, \hspace{5mm} j = -n,-n+1, \dots, n.
\end{equation}
By symmetry, $\cos \psi_j = \cos \psi_{1-j}$ for $j=1,\dots,n$ and hence we may discard the roots $j=-n+1,\dots,0$. Moreover, $\sin \psi_{-n} = 0$ violating the restriction $\sin\psi \neq 0$. This leaves us with the $n$ eigenvalues 
$$
\mu_j= \frac{1}{2N(1 - \cos \psi_j)}, \hspace{5mm} j = 1, \dots, n 
$$
as desired.

It remains only to quantify the size of the eigenvalues $\mu_j$. Expanding $\cos x = 1 - x^2/2! + x^4/4! + o(x^4)$, we have $x^2 - x^4/12 \leq 2(1-\cos x) \leq x^2$ for all $x$, and thus
$$\frac{1}{N x^2} \leq \frac{1}{2N(1-\cos x)} \leq \frac{1}{Nx^2(1-x^2/12)}.$$
Since $\psi_j = \frac{j-1/2}{n+1/2}\pi,$ we have $\psi_j^2/12 < 5/6$ (on bounding $\pi^2$ by 10 and the fraction by 1) and so $\frac{1}{N\psi_j^2(1-\psi_j^2/12)} < \frac{6}{N\psi_j^2}$. Hence we have $\frac{1}{N\psi_j^2} \leq \mu_j \leq \frac{6}{N\psi_j^2}$ and
$
\mu_j \asymp \frac{1}{N\psi_j^2}.
$
\end{proof}

Let $|A|$ denote the determinant of a matrix $A$.

\begin{lemma}\label{lem:determinant}
The solutions to the characteristic equation of $\mathbf{K}_{nn}$,  $|\mathbf{K}_{nn} - \mu \mathbf{I}_n| = 0$, are given by the solutions to
	$$
\frac{(-1)^n}{\sin \psi} \left( \left(\mu^n - \frac{1}{N}\mu^{n-1}\right)\sin(n\psi) - \mu^n\sin((n-1)\psi) \right) = 0,
	$$
	where $\cos\psi  = 1 - \frac{1}{2N\mu}$.
\end{lemma}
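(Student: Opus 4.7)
The plan is to derive a three-term linear recurrence for $D_n(\mu) := |\mathbf{K}_{nn} - \mu \mathbf{I}_n|$, solve it in closed form, and then recast the result into the stated expression via standard trigonometric identities.

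First, I would establish the recurrence. The row operation $R_n \to R_n - R_{n-1}$ preserves the determinant while reducing the last row to $(0, \ldots, 0, \mu, 1/N - \mu)$. Cofactor expansion along this modified row gives
\[
D_n = (1/N - \mu)\, C_{n,n} + \mu\, C_{n,n-1},
\]
where $C_{n,n} = D_{n-1}$ directly. For $C_{n,n-1} = -\det(B)$, I would observe that the submatrix $B$ (row $n$ and column $n-1$ deleted from $\mathbf{K}_{nn} - \mu \mathbf{I}_n$) differs from $\mathbf{K}_{n-1,n-1} - \mu \mathbf{I}_{n-1}$ only in its $(n-1,n-1)$ entry, being $(n-1)/N$ rather than $(n-1)/N - \mu$. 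Writing $B = (\mathbf{K}_{n-1,n-1} - \mu \mathbf{I}_{n-1}) + \mu\, e_{n-1} e_{n-1}^T$, the matrix determinant lemma yields $\det(B) = D_{n-1} + \mu D_{n-2}$, and combining produces
\[
D_n = (1/N - 2\mu) D_{n-1} - \mu^2 D_{n-2}, \qquad D_0 = 1, \quad D_1 = 1/N - \mu.
\]

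Next, I would solve this recurrence. Its characteristic polynomial $x^2 - (1/N - 2\mu)x + \mu^2$ has roots $\alpha, \beta = -\mu \phi_\pm$, where $\phi_\pm = e^{\pm i \psi}$ and $\cos \psi = 1 - 1/(2\mu N)$: indeed $\phi_+ \phi_- = 1$ and $\phi_+ + \phi_- = 2 \cos \psi$ match the product $\mu^2$ and sum $1/N - 2\mu$ of $-\mu\phi_\pm$. Fitting the two constants from $D_0, D_1$ and using $(\phi_+^k - \phi_-^k)/(\phi_+ - \phi_-) = \sin(k\psi)/\sin \psi$ (Euler's formula) gives, after simplification,
\[
D_n = \frac{(-1)^n \mu^n}{\sin \psi}\bigl[\sin((n+1)\psi) - \sin(n\psi)\bigr].
\]

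Finally, I would match the stated form. The sum-to-product identity $\sin((n+1)\psi) + \sin((n-1)\psi) = 2 \cos \psi \sin(n\psi)$ combined with $2 \cos \psi = 2 - 1/(\mu N)$ lets me replace $\sin((n+1)\psi)$ by $(2 - 1/(\mu N))\sin(n\psi) - \sin((n-1)\psi)$ inside the bracket; collecting powers of $\mu$ then yields exactly
\[
D_n = \frac{(-1)^n}{\sin \psi}\Bigl[\bigl(\mu^n - \mu^{n-1}/N\bigr)\sin(n\psi) - \mu^n \sin((n-1)\psi)\Bigr].
\]
The main technical obstacle is carefully identifying the cofactor $C_{n,n-1}$ and spotting the rank-one structure of $B$; once the recurrence is in hand, the remaining steps are routine algebra and trigonometric manipulation.
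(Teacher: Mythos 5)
Your proof is correct: I checked the row operation, the identification of the cofactor $C_{n,n}=D_{n-1}$, the rank-one structure $B=(\mathbf{K}_{n-1,n-1}-\mu\mathbf{I}_{n-1})+\mu e_{n-1}e_{n-1}^T$ giving $\det(B)=D_{n-1}+\mu D_{n-2}$, the resulting recurrence $D_n=(1/N-2\mu)D_{n-1}-\mu^2D_{n-2}$ (which reproduces $D_2$ correctly), the characteristic roots $-\mu e^{\pm i\psi}$ with $\cos\psi=1-\tfrac{1}{2\mu N}$, and the closed form $D_n=\tfrac{(-1)^n\mu^n}{\sin\psi}\bigl[\sin((n+1)\psi)-\sin(n\psi)\bigr]$, which the sum-to-product step converts exactly into the stated expression. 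The overall strategy is the same as the paper's (a three-term linear recurrence for the characteristic polynomial, solved via complex-exponential roots of modulus $\mu$ and finished with trigonometric identities), but the route to the recurrence differs in a genuine way. The paper applies two rounds of row differencing to reduce $\mathbf{K}_{nn}-\mu\mathbf{I}_n$ to a symmetric tridiagonal matrix, invokes the standard Toeplitz tridiagonal recurrence $d_n=(\tfrac{2}{N}-\mu)d_{n-1}-\mu^2 d_{n-2}$ for the bulk, and handles the perturbed last diagonal entry by one final Laplace expansion, along the way introducing an auxiliary angle $\psi'=\psi+\pi$ whose sign bookkeeping produces the $(-1)^n$. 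You instead keep the full determinants $D_n$ throughout, obtaining a self-contained (and different-looking, though equivalent) recurrence by a single row operation plus the matrix determinant lemma to pin down the off-diagonal cofactor. Your version buys a cleaner derivation directly in terms of $\psi$, with the initial conditions $D_0=1$, $D_1=1/N-\mu$ doing the work that the paper's boundary expansion does; the paper's version buys the familiarity of the textbook tridiagonal determinant machinery. Both, of course, implicitly require $\sin\psi\neq 0$, i.e.\ distinct characteristic roots, which is consistent with the statement of the lemma.
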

\begin{proof}
Using row operations, the desired determinant equals
	$$ \left| \mathbf{K}_{nn} - \mu \mathbf{I}_n \right| =
\left|\begin{matrix}
\frac{1}{N} - \mu & \frac{1}{N} &\cdots & \frac{1}{N} \\
	\frac{1}{N} & \frac{2}{N} - \mu &\dots &\frac{2}{N} \\
	\vdots &\vdots & \ddots & \vdots \\
	\frac{1}{N} & \frac{2}{N} &\cdots & \frac{n}{N} - \mu
\end{matrix}
 \right|  = 
 \left|\begin{matrix}
\frac{1}{N} - \mu & \frac{1}{N} &\cdots & \cdots &\frac{1}{N} \\
\mu & \frac{1}{N} - \mu & \frac{1}{N}&\dots  &\frac{1}{N} \\
	0 & \mu & \ddots & \ddots&\vdots \\ 
	\vdots & \ddots & \ddots& \frac{1}{N} - \mu &\frac{1}{N}\\
0 & \cdots &0 & \mu & \frac{1}{N} - \mu
\end{matrix}
 \right|,$$
which can be further be reduced to
$$
|\mathbf{K}_{nn} - \mu \mathbf{I}_n| = \left|\begin{matrix}
\frac{2}{N} - \mu & \mu & 0 & \cdots & 0 \\
\mu & \frac{2}{N} - \mu & \mu &\ddots  &\vdots \\
	0 & \mu & \ddots & \ddots  & 0 \\ 
	\vdots & \ddots &\ddots & \frac{2}{N} - \mu &\mu\\
0 & \cdots &0 & \mu & \frac{1}{N} - \mu
\end{matrix}
 \right| =: d_n,
$$
the determinant of a symmetric tridiagonal $n\times n$ matrix.
Using the Laplace expansion of the determinant in terms of the determinants of its $(n-1)\times (n-1)$ submatrices, we obtain the recurrence relation $d_n = \left(\frac{2}{N} - \mu\right)d_{n-1} - \mu^2 d_{n-2}$. The solution to this recurrence is given by
$$
d_n = \frac{1}{\phi_+ - \phi_-}\left( \phi_+^{n+1} - \phi_-^{n+1}\right),
$$
where $\phi_\pm$ satisfy $\phi^2 - \left(\frac{2}{N} - \mu\right)\phi + \mu^2 = 0$. That is,
$$
\phi_\pm = \frac{(\frac{2}{N} - \mu) \pm \sqrt{\frac{4}{N^2} - \frac{4\mu}{N}}}{2}.
$$
Similar to when we solved the recurrence in the proof of Lemma \ref{lem:evals_and_evecs}, we end up with $\phi_\pm = \mu e^{\pm i \psi'},$ where $\cos(\psi') = \frac{1}{2N\mu} - 1$. We use the notation $\psi'$ to emphasise that these angles are different to the $\psi'$s found in the proof of Lemma \ref{lem:evals_and_evecs}; indeed, it is easy to see that $\psi' = \psi + \pi$, so that $\sin(n\psi') = (-1)^n \sin(n\psi)$. This yields
$$
d_n = (-1)^n\frac{\mu^n}{\sin(\psi)}\sin((n+1)\psi),
$$
and the result is shown by seeing
\begin{align*}
|\mathbf{K}_{nn} - \mu I|&= \left(\frac{1}{N} - \mu\right)d_{n-1} - \mu^2 d_{n-2}	\\ &= (-1)^{n-1} \left(\frac{1}{N} - \mu\right)\frac{\mu^{n-1}}{\sin(\psi)}\sin(n\psi) - (-1)^{n-2}\mu^2\frac{\mu^{n-2}}{\sin(\psi)}\sin((n-1)\psi) \\
&=\frac{(-1)^n}{\sin \psi} \left( \left(\mu^n - \frac{1}{N}\mu^{n-1}\right)\sin(n\psi) - \mu^n\sin((n-1)\psi) \right)
\end{align*}

\end{proof}

Given the explicit expressions derived for the eigenvalues and eigenvectors of the covariance matrix $\mathbf{K}_{nn}$, we derive bounds for the size of the remainder terms presented in Lemma \ref{lem:var_to_full_relationships}.

\begin{lemma}\label{prop:control_variational_remainders}
	Suppose that $f_0 \in C^\alpha[0,1]$, $\alpha\in(0,1]$, and consider the rescaled Brownian motion prior of regularity $\gamma \in(0,1]$. The corresponding SGPR $Q^*$ based on the  first $m=m_n\to\infty$ eigenvector inducing variables satisfies, as $n\to\infty$,
	\begin{align}
		b_m(x) &=   b_n(x) + O(nc_nm^{-(1+\alpha)})\label{eq:var_bias_control}\\
		t^2_m(x) &= t_n^2(x) + O(nc_n^2m^{-3}) \label{eq:var_var_mean_control}\\
		\sigma^2_m(x) &= \sigma_n^2(x) + O(nc_n^2m^{-3}). \label{eq:var_var_control}
	\end{align}
\end{lemma}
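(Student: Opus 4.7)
The plan is to use Lemma \ref{lem:var_to_full_relationships} to reduce the three desired bounds to estimating the inner products $\langle \mathbf{r}_m(x), \mathbf{f}_0 \rangle$, $\|\mathbf{r}_m(x)\|^2$, and $\langle \mathbf{r}_m(x), \mathbf{k}_n(x) \rangle$. Expanding $\mathbf{r}_m(x) = \sum_{j > m} \eta_j \mathbf{v}_j \mathbf{v}_j^T \mathbf{k}_n(x)$ in the eigenbasis from Lemma \ref{lem:evals_and_evecs}, each becomes a weighted sum over $j > m$ involving $\eta_j = (1+\mu_j)^{-1} \leq 1$ and the two coefficient types $\langle \mathbf{v}_j, \mathbf{k}_n(x)\rangle$ and $\langle \mathbf{v}_j, \mathbf{f}_0\rangle$. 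The task reduces to obtaining sharp pointwise bounds on these coefficients and summing in $j$ using $\mu_j \asymp nc_n/j^2$.

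For $\langle \mathbf{v}_j, \mathbf{k}_n(x)\rangle$, I would exploit the regular design by writing $\mathbf{k}_n(x) = \mathbf{k}_n(x_L) + c_n(x - x_L)\mathbf{1}_{\{l > L\}}$ with $L = \lfloor x(n+1/2)\rfloor$. The first piece is a column of $\mathbf{K}_{nn}$, so the eigenvector identity gives $\langle \mathbf{v}_j, \mathbf{k}_n(x_L)\rangle = \mu_j v_j^L$, bounded uniformly by $\mu_j \|\mathbf{v}_j\|_\infty \lesssim \sqrt{n}c_n/j^2$. The correction is controlled via the Dirichlet-type estimate $|\sum_{l > L}\sin(l\psi_j)| \lesssim 1/\sin(\psi_j/2) \lesssim n/j$, giving a lower-order $O(c_n/(\sqrt{n}j))$ contribution. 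Combining, $|\langle \mathbf{v}_j, \mathbf{k}_n(x)\rangle| \lesssim \sqrt{n}c_n/j^2$ for $j \leq n$, and since $\eta_j^2 \leq \eta_j \leq 1$,
\begin{equation*}
\|\mathbf{r}_m(x)\|^2 \leq \langle \mathbf{r}_m(x), \mathbf{k}_n(x)\rangle \lesssim \sum_{j > m} \frac{nc_n^2}{j^4} \lesssim \frac{nc_n^2}{m^3},
\end{equation*}
proving \eqref{eq:var_var_mean_control} and \eqref{eq:var_var_control}.

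The main obstacle is bounding $|\langle \mathbf{v}_j, \mathbf{f}_0\rangle|$, which up to a $\sqrt{n/2}$-factor is a midpoint Riemann sum $R_j$ approximating the Fourier-sine coefficient $I_j = \int_0^1 \sin((j-1/2)\pi t) f_0(t)\,dt$. The classical symmetry trick $\sin(kt) = -\sin(k(t+\pi/k))$ combined with Hölder continuity yields $|I_j| \lesssim j^{-\alpha}$. The delicate step is sharpening the quadrature error: a naive estimate using that $\sin((j-1/2)\pi t)$ is Lipschitz with constant $O(j)$ gives only $|R_j - I_j| \lesssim j/n$, which is too weak and spoils the target rate. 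Instead, on each subinterval of length $h = 1/(n+1/2)$, I expand $\sin(k(x_l + s)) = \sin(kx_l)\cos(ks) + \cos(kx_l)\sin(ks)$ and exploit $\int_{-h/2}^{h/2}\sin(ks)\,ds = 0$ together with $|\cos(ks) - 1| \lesssim (ks)^2$; coupling with $|f_0(x_l + s) - f_0(x_l)| \lesssim |s|^\alpha$ reduces the per-subinterval error to $O(h^{1+\alpha} + k^2 h^3)$. Summing over the $\asymp n$ subintervals gives $|R_j - I_j| \lesssim n^{-\alpha}$, hence $|\langle \mathbf{v}_j, \mathbf{f}_0\rangle| \lesssim \sqrt{n}(j^{-\alpha} + n^{-\alpha})$.

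Combining with the kernel-coefficient bound and summing,
\begin{equation*}
|\langle \mathbf{r}_m(x), \mathbf{f}_0\rangle| \lesssim \sum_{j > m} \frac{\sqrt{n}c_n}{j^2} \cdot \sqrt{n}\bigl(j^{-\alpha} + n^{-\alpha}\bigr) \lesssim nc_n\left(\frac{1}{m^{1+\alpha}} + \frac{1}{n^\alpha m}\right) \lesssim \frac{nc_n}{m^{1+\alpha}},
\end{equation*}
using $m \leq n$ to absorb the second term; substituting into \eqref{eq:var_mean_vs_full_mean} yields \eqref{eq:var_bias_control}. The only serious technical hurdle is the symmetry-based cancellation in the Riemann-sum error: without this refinement one loses a full polynomial factor in the bias bound, and the coverage conclusions of Theorem \ref{thm:coverage} would not go through.
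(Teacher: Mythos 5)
Your overall route is the same as the paper's: reduce to the three inner products via Lemma \ref{lem:var_to_full_relationships}, use the explicit spectral data of Lemma \ref{lem:evals_and_evecs}, bound $\langle\mathbf{v}_j,\mathbf{k}_n(x)\rangle$ by the eigenvector identity at the nearest design point plus a Dirichlet-kernel correction for the off-grid part, and bound $\langle\mathbf{v}_j,\mathbf{f}_0\rangle$ by comparing the discrete sum to the Fourier--sine coefficient $I_j=O(j^{-\alpha})$. The variance bounds, the inequality $\|\mathbf{r}_m(x)\|^2\le\langle\mathbf{r}_m(x),\mathbf{k}_n(x)\rangle$ via $\eta_j\le 1$, and the final summations in $j$ are all correct and match the paper.

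The one place where your argument as written does not close is the step you yourself identify as the crux. Summing your per-subinterval quadrature error $O(h^{1+\alpha}+k^2h^3)$ over the $\asymp n$ subintervals gives $O(n^{-\alpha}+k^2h^2)=O(n^{-\alpha}+j^2/n^2)$, not $O(n^{-\alpha})$: for $j\gg n^{1-\alpha/2}$ the second term dominates. Carried through, this adds $\sqrt{n}\,j^2/n^2$ to the bound on $|\langle\mathbf{v}_j,\mathbf{f}_0\rangle|$ and hence an extra $\sum_{j>m}(nc_n/j^2)(j^2/n^2)\asymp c_n$ to the bias difference, which is not $O(nc_nm^{-(1+\alpha)})$ once $m^{1+\alpha}\gg n$ --- precisely the regime the downstream coverage results require. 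The repair lives inside your own decomposition: the $(\cos(ks)-1)$ contribution equals $c_k\sum_l f_0(x_l)\sin(kx_l)$ with $c_k=\int_{-h/2}^{h/2}(\cos(ks)-1)\,ds$ the \emph{same} constant for every $l$, i.e. a multiple of the very sum $S_j$ you are estimating, with $|c_k|\le(1-2/\pi)h$. Moving it to the left-hand side yields $\frac{2\sin(kh/2)}{k}S_j=I_j+O(h)+O(nh^{1+\alpha})$, and since $\frac{2\sin(kh/2)}{k}\ge 2h/\pi$ for $kh/2<\pi/2$, this gives $|S_j|\lesssim n(j^{-\alpha}+n^{-\alpha})$ uniformly in $j\le n$. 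With that absorption step the rest of your proof goes through and reproduces the paper's bounds; your instinct that the naive Lipschitz quadrature bound is too weak is correct (the paper itself is terse here, citing only Lemma \ref{lem:riemann_sum_approx}), but the summation as you state it still loses the large-$j$ range.
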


\begin{proof}
First, suppose that $x = x_i$ is a design point. Starting with the bias, we have,
\begin{align*}
b_m(x) &=  b_n(x) + \mathbf{k}_n(x)^T \left(\sum_{j=m+1}^n \eta_j  \mathbf{v}_j \mathbf{v}_j^T \right) \mathbf{f}_0 =b_n(x) +\sum_{j=m+1}^n \eta_j (\mathbf{k}_n(x)^T \mathbf{v}_j) (\mathbf{v}_j^T \mathbf{f}_0).
\end{align*}
In this case $\mathbf{k}_n(x_i)^T \mathbf{v}_j= \mu_j v_j^i$ and the next step is to bound $\mathbf{v}_j^T \mathbf{f}_0$ for each $j$. Using the explicit form for $\mathbf{v}_j$ given in Lemma \ref{lem:evals_and_evecs},
\begin{align*}
	\mathbf{v}_j^T \mathbf{f_0} &= \sum_{l = 1}^n \frac{2}{\sqrt{2n+1}} \sin(l\psi_j)f_0(x_l) =\frac{2}{\sqrt{2n+1}} \sum_{l=1}^n\sin\left(l \frac{(j - 1/2)\pi}{n+1/2} \right)f_0(x_l) \\
	&=\frac{2}{\sqrt{2n+1}} \sum_{l=1}^n\sin\left(x_l(j - 1/2)\pi \right)f_0(x_l).
\end{align*}
Now, writing $\pi_j := (j-1/2)\pi$, we have
$
\sin(x_l \pi_j) = \frac{1}{2i}(e^{ix_l\pi_j} - e^{-ix_l\pi_j})
$
and thus
\begin{align*}
\left| \sum_{l=1}^n\sin\left(x_l(j - 1/2)\pi \right)f_0(x_l) \right| &\lesssim \left|n\int_0^1 \sin(t(j-1/2)\pi)f_0(t)dt\right|  + n^{1-\alpha} \\
&\leq \left|\frac{n}{2i} \left[\int_0^1e^{ix\pi_j}f_0(x) dx - \int_0^1e^{-ix\pi_j}f_0(x) dx \right]  \right| + n^{1-\alpha} \\
&\lesssim n\pi_j^{-\alpha} + n^{1-\alpha} \asymp n\pi_j^{-\alpha},    
\end{align*}
for $f_0 \in C^\alpha([0,1])$, where the first inequality follows from Lemma \ref{lem:riemann_sum_approx} below. We have thus shown that $|\mathbf{v}_j^T\mathbf{f}_0|\lesssim \sqrt{n}(j-1/2)^{-\alpha}$. Using this, that
$\mu_j \asymp  (n+1/2)c_n \frac{1}{(j-1/2)^2}$
by Lemma \ref{lem:evals_and_evecs} and that $\eta_j \lesssim 1$,
\begin{align*}
|b_m(x) - b_n(x)| \lesssim \sum_{j = m+1}^n  \eta_j \mu_jv_j^i\sqrt{n}(j-1/2)^{-\alpha} \asymp nc_n\sum_{j=m+1}^n j^{-2-\alpha} \asymp nc_n m^{-(1+\alpha)}
\end{align*}
as required.
Next, we have,
\begin{align*}
	t^2_m(x)=t_n^2(x) - \|\mathbf{r}_m(x)\|^2  = t_n^2(x) -  \mathbf{k}_n(x)^T\sum_{j = m+1}^n \eta_j^2 \mathbf{v}_j \mathbf{v}_j^T \mathbf{k}_n(x).
\end{align*}
Using again that $x = x_i$ and so $\mathbf{k}_n(x_i)^T\mathbf{v}_j = \mu_j v_j^i$, we have,
\begin{align*}
	\mathbf{k}_n(x)^T \left(\sum_{j = m+1}^n \eta_j^2 \mathbf{v}_j \mathbf{v}_j^T \right) \mathbf{k}_n(x) = \sum_{j = m+1}^n\eta_j^2\mu_j^2 (v_j^i)^2 \lesssim \frac{1}{n}\sum_{j = m+1}^n \frac{n^2c_n^2}{(j-1/2)^4} \lesssim n c_n^2m^{-3},
\end{align*}
as required. Finally, 
\begin{align}\label{eq:sigma_form}
	\sigma_m^2(x) = \sigma_n^2(x) + \mathbf{k}_n(x)^T \left(\sum_{j=m+1}^n \eta_j \mathbf{v}_j \mathbf{v}_j^T\right) \mathbf{k}_n(x),
\end{align}
and again at a design point $x = x_i$,
\begin{align*}
	\mathbf{k}_n(x)^T \left(\sum_{j=m+1}^n \eta_j \mathbf{v}_j \mathbf{v}_j^T \right)\mathbf{k}_n(x) = \sum_{j=m+1}^n \eta_j \mu_j^2 (v_j^i)^2 \lesssim nc_n^2m^{-3}.
\end{align*}
This establishes the three relations at the design points $x= x_i$.

Turning to the general case $x \in [0,1]$, let $i$ be such that $x_{i} < x < x_{i+1}$. Recalling that the covariance function equals $k(x,x') = c_n (x\wedge x')$, we obtain
\begin{align*}
	\mathbf{k}_n(x) = c_n \left[\begin{matrix}
 	x_1 \\ \vdots \\x_{i} \\ x \\ \vdots \\ x
 \end{matrix}\right] = \mathbf{k}_n(x_{i}) + c_n \left[\begin{matrix}
 	0 \\ \vdots \\ 0 \\ x - x_{i} \\ \vdots \\ x - x_{i}
 \end{matrix}\right],
\end{align*}
where $0<x - x_{i} < \frac{1}{n+1/2}$. Consider first the case of the SGPR variance $\sigma_m^2(x)$. The last display implies that $(\mathbf{k}_n(x)-\mathbf{k}_n(x_i))^T\mathbf{v}_j = c_n(x-x_i) \sum_{l=i+1}^n v_j^l$, and hence the quadratic term in \eqref{eq:sigma_form} can be rewritten as
\begin{equation}
\begin{split}
\mathbf{k}_n(x)^T \left(\sum_{j=m+1}^n \eta_j \mathbf{v}_j \mathbf{v}_j^T \right)\mathbf{k}_n(x) &= \mathbf{k}_n(x_i)^T \left(\sum_{j=m+1}^n \eta_j \mathbf{v}_j \mathbf{v}_j^T \right)\mathbf{k}_n(x_i)  \label{eq:des_rem}\\
&\quad +2 c_n \sum_{j=m+1}^n \eta_j \left[(x-x_i)\sum_{l = i + 1}^nv_j^l\right] \mathbf{v}_j^T\mathbf{k}_n(x_i)  \\
&\quad +c_n^2 \sum_{j=m+1}^n \eta_j \left[(x-x_i)\sum_{l = i + 1}^nv_j^l\right]^2 . 
\end{split}
\end{equation}
The first term is exactly the quadratic term evaluated at the design point $x_i$, which we showed above has size $O(nc_n^2m^{-3})$. Using Lemmas \ref{lem:evals_and_evecs} and \ref{lem:riemann_sum_approx},
\begin{align*}
\sum_{l = i + 1}^nv_j^l &= \frac{2}{\sqrt{2n+1}}\sum_{l = i + 1}^n	 \sin(x_l(j-1/2)\pi) \\
& =\frac{2n}{\sqrt{2n+1}}\left(\int_{x_{i}}^1\sin(t (j - 1/2)\pi)dt + O(n^{-1} )\right) \\
&=\frac{2n}{\sqrt{2n + 1}} \frac{\cos(x_{i}(j-1/2)\pi)}{(j-1/2)\pi} + O(n^{-1/2}) = O \left( \frac{\sqrt{n}}{j-1/2}\right) + O(n^{-1/2}).
\end{align*}
By Lemma \ref{lem:evals_and_evecs}, $v_j^T\mathbf{k}_n(x_i) = \mu_j v_j^i =O(c_n\sqrt{n}j^{-2})$. Since also $\eta_j \lesssim 1$, the second term in \eqref{eq:des_rem} is bounded by a multiple of
$$\frac{c_n}{\sqrt{n}} \sum_{j=m+1}^n j^{-1} |\mathbf{v}_j^T\mathbf{k}_n(x_i)| \lesssim c_n^2 \sum_{j=m+1}^n j^{-3} \lesssim c_n^2 m^{-2} = O(nc_n^2 m^{-3}).$$
The third term in \eqref{eq:des_rem} can similarly be shown to be $O(c_n^2 n^{-1}m^{-1}) = O(nc_n^2 m^{-3})$. We have thus shown that \eqref{eq:des_rem} is of size $O(nc_n^2 m^{-3})$ exactly as in the case where $x$ was a design point, which implies the desired bound for the SGPR variance $\sigma_m^2(x)$.

The case $t_m^2(x)$ follows in a similar fashion, simply replacing $\eta_j$ by $\eta_j^2$, while the bias $b_m(x)$ uses additionally the bound $|\mathbf{v}_j^T\mathbf{f}_0|\lesssim \sqrt{n}(j-1/2)^{-\alpha}$ derived above. In conclusion, the three quantities $b_m(x)$, $t_m^2(x)$ and $\sigma_m^2(x)$ satisfy the same bounds as when $x$ is a design point, which completes the proof.
\end{proof}

The next standard bound controls the error when approximating a $C^\alpha$ function by its Riemann sum.

\begin{lemma}\label{lem:riemann_sum_approx}
    If $f \in C^\alpha[0,1]$ with $\alpha \in (0, 1]$, then
    $$
    \left| \int_0^1 f(t) dt - \frac{1}{n}\sum_{i=1}^n f(x_i)\right| \leq (n+1/2)^{-\alpha},
    $$
    with $x_i = \frac{i}{n+1/2}$ for $i = 1, \dots, n.$
\end{lemma}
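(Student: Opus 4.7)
This is a Riemann-sum approximation argument exploiting the H\"older regularity of $f$. The only minor subtlety is that the sample spacing of the design points is $h := 1/(n+1/2)$, whereas the sum is weighted by $1/n$, so these two slightly mismatched normalizations must be reconciled at the end. The plan is to first prove the approximation with weight $h$ (the natural spacing), and then bound the remaining boundary term and weight-mismatch, both of which turn out to be of order $1/n$.

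First I would partition $[0,1]$ into the $n$ subintervals $J_i := [(i-1)h, ih]$ for $i = 1, \dots, n$, each of length $h$, together with a leftover interval $J_{n+1} := [nh, 1]$ of length $h/2 = 1/(2n+1)$. By construction $x_i = ih$ is the right endpoint of $J_i$, so for $t \in J_i$ we have $|t - x_i| \leq h$. Using the H\"older seminorm $[f]_\alpha := \sup_{s\neq t} |f(s)-f(t)|/|s-t|^\alpha$,
$$\left| \int_{J_i} f(t) dt - h f(x_i) \right| \leq \int_{J_i} |f(t) - f(x_i)|\, dt \leq [f]_\alpha h^{1+\alpha}.$$
Summing over $i = 1,\dots,n$ and using $nh \leq 1$ gives
$$\left| \int_0^{nh} f(t)\, dt - h \sum_{i=1}^n f(x_i) \right| \leq n [f]_\alpha h^{1+\alpha} \leq [f]_\alpha (n+1/2)^{-\alpha}.$$

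Next I would account for the two remaining lower-order terms. The boundary integral $\int_{J_{n+1}} f$ is bounded by $\|f\|_\infty \cdot h/2 = O(1/n)$. The weight mismatch contributes $(\tfrac{1}{n} - h)\sum_{i=1}^n f(x_i) = \tfrac{1}{n(2n+1)}\sum_{i=1}^n f(x_i) = O(\|f\|_\infty / n)$. For $\alpha \in (0,1]$ both $O(1/n)$ terms are dominated by a constant times $(n+1/2)^{-\alpha}$, so they can be absorbed into the final bound.

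The main obstacle is purely cosmetic: the statement as displayed has no explicit constant, so one must interpret the inequality as holding up to a constant depending on $f$ (specifically on $[f]_\alpha$ and $\|f\|_\infty$), which is consistent with how the lemma is invoked inside the proof of Lemma~\ref{prop:control_variational_remainders}. No deeper ideas are required; the entire argument is a one-shot application of H\"older continuity on a fixed partition.
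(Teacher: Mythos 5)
Your proof is correct; the paper gives no proof of this lemma (it is labelled ``standard''), and your partition-plus-H\"older argument is exactly the standard one it implicitly relies on. You are also right that the displayed inequality cannot hold with constant $1$ for arbitrary $f\in C^\alpha[0,1]$ (rescale $f$), so it must be read with a multiplicative constant depending on $[f]_\alpha$ and $\|f\|_\infty$, which is consistent with its only use inside a $\lesssim$ bound in the proof of Lemma~\ref{prop:control_variational_remainders}.
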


The following lemma shows that for $m$ large enough, the bias, frequentist variance of the SGPR mean and SGPR variance can each be related to those of the full posterior derived in \cite{SV15}.

\begin{lemma}\label{prop:var_stats_same_as_full_stats}
	If $m \gg n^{\frac{1}{1+2\gamma}\left(\frac{2+\alpha}{1+\alpha} \right)}$, then the SGPR satisfies
		\begin{align*}
		b_m(x) &=   b_n(x)(1+o(1))\\
		t^2_m(x) &= t_n^2(x)(1+o(1)) \\
		\sigma^2_m(x) &= \sigma_n^2(x)(1+o(1)).
	\end{align*}
\end{lemma}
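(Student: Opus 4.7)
The plan is to combine Lemma \ref{prop:control_variational_remainders} with sharp asymptotics for $|b_n(x)|$, $t_n^2(x)$ and $\sigma_n^2(x)$ in the rescaled Brownian motion model, either as given in \cite{SV15} or extracted from the spectral decomposition of Lemma \ref{lem:evals_and_evecs}. Recalling that $c_n=(n+1/2)^{(1-2\gamma)/(1+2\gamma)}$, one has $nc_n\asymp n^{2/(1+2\gamma)}$ and $nc_n^2\asymp n^{(3-2\gamma)/(1+2\gamma)}$, so the remainders in Lemma \ref{prop:control_variational_remainders} take the clean form $O(n^{2/(1+2\gamma)}m^{-(1+\alpha)})$ for the bias and $O(n^{(3-2\gamma)/(1+2\gamma)}m^{-3})$ for the two variance quantities.

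The next step is to translate the assumption $m \gg n^{\frac{1}{1+2\gamma}(\frac{2+\alpha}{1+\alpha})}$ into absolute bounds on these remainders. A direct exponent computation gives
\[
n^{2/(1+2\gamma)}m^{-(1+\alpha)}=o\bigl(n^{-\alpha/(1+2\gamma)}\bigr),
\]
the exponent $\tfrac{2+\alpha}{1+\alpha}$ being engineered precisely for this cancellation. Since $\tfrac{2+\alpha}{1+\alpha}\in[\tfrac32,2]$ on $\alpha\in(0,1]$, the hypothesis also forces $m\gg n^{1/(1+2\gamma)}$, and therefore $n^{(3-2\gamma)/(1+2\gamma)}m^{-3}=o(n^{-2\gamma/(1+2\gamma)})$. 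I would then invoke the full-posterior asymptotics $t_n^2(x),\sigma_n^2(x)\asymp n^{-2\gamma/(1+2\gamma)}$ and $|b_n(x)|\asymp n^{-\alpha/(1+2\gamma)}$ for the $C^\alpha$-functions relevant to Theorem \ref{thm:coverage} (as in \cite{SV15}); dividing each of the three remainder bounds by its corresponding main-term asymptotic yields $b_m(x)/b_n(x),\ t_m^2(x)/t_n^2(x),\ \sigma_m^2(x)/\sigma_n^2(x)\to 1$, which is exactly the claim.

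The main obstacle I expect is the bias statement: here one needs to control $\langle\mathbf{r}_m(x),\mathbf{f}_0\rangle$ \emph{relative} to $|b_n(x)|$, whereas Lemma \ref{prop:control_variational_remainders} only bounds it in absolute terms. This is precisely the difficulty that appears in the full-posterior pointwise analysis in \cite{SV15}, and I would resolve it by importing, or re-deriving in the Brownian spectral basis of Lemma \ref{lem:evals_and_evecs}, a matching lower bound $|b_n(x)|\gtrsim n^{-\alpha/(1+2\gamma)}$ for the specific $f_0$ targeted in each case of Theorem \ref{thm:coverage}. The two variance relations, by contrast, are purely kernel-side and need no information about $f_0$, so they follow more or less directly from the spectral estimates already present in the proof of Lemma \ref{prop:control_variational_remainders}.
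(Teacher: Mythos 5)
Your proposal matches the paper's proof essentially step for step: both combine the remainder bounds of Lemma \ref{prop:control_variational_remainders} with the full-posterior asymptotics $|b_n(x)|\lesssim n^{-\alpha/(1+2\gamma)}$ and $t_n^2(x),\sigma_n^2(x)\asymp n^{-2\gamma/(1+2\gamma)}$ from \cite{SV15}, and the same exponent computations show the remainders are negligible under the stated condition on $m$. The one point where you are more careful than the paper is the bias: the paper's proof quotes only the upper bound on $|b_n(x)|$ and then asserts $nc_n m^{-(1+\alpha)}=o(b_n(x))$, whereas you correctly note that a matching lower bound $|b_n(x)|\gtrsim n^{-\alpha/(1+2\gamma)}$ (available in \cite{SV15} for the functions relevant to Theorem \ref{thm:coverage}) is what makes that relative statement literally valid.
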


\begin{proof}
From \cite{SV15}, we have the following expressions for the full posterior:
\begin{align*}
	|b_n(x)|  \lesssim n^{-\frac{\alpha}{1+2\gamma}}, \qquad
	t_n^2(x)  \asymp n^{-\frac{2\gamma}{1+2\gamma}},\qquad
\sigma_n^2(x) \asymp n^{-\frac{2\gamma}{1+2\gamma}}.
\end{align*}
Thus, if we can show that the $O$ terms given in equations \eqref{eq:var_bias_control}-\eqref{eq:var_var_control} are smaller than their respective terms above, we have the result.
	By Lemma \ref{prop:control_variational_remainders}, we have $b_m(x) = b_n(x) + O(nc_nm^{-(1+\alpha)})$, and $nc_nm^{-(1+\alpha)} = n^{\frac{2}{1+2\gamma}}m^{-(1+\alpha)} \lesssim n^{-\frac{\alpha}{1+2\gamma}}$ if and only if  $n^{\frac{1}{1+2\gamma}(\frac{2+\alpha}{1+\alpha})} \ll m $.
	Thus, if $m \gg n^{\frac{1}{1+2\gamma}(\frac{2+\alpha}{1+\alpha})}$ then $nc_nm^{-(1+\alpha)} = o(b_n(x))$ and $b_m(x) = b_n(x)(1+o(1))$. Similarly,
	$$nc_n^2m^{-3} \ll t_n^2(x) = \sigma_n^2(x) \asymp n^{-\frac{2\gamma}{1+2\gamma}} \Leftrightarrow m \gg n^{\frac{1}{1+2\gamma}}.
	$$
	Thus, $t_m^2(x) = t_n^2(x)(1+o(1))$ and $\sigma_m^2(x) = \sigma_n^2(x)(1+o(1))$ if $m \gg n^{\frac{1}{1+2\gamma}(\frac{2+\alpha}{1+\alpha})}$ (in fact these are implied even at the smaller bound $n^{\frac{1}{1+2\gamma}}$).
\end{proof}

\begin{proof}[Proof of Theorem \ref{prop:contraction_BM}]
The proof follows from Proposition \ref{prop:contraction_rate_general} combined with the two groups of expressions \eqref{eq:var_mean_vs_full_mean}-\eqref{eq:var_var_vs_full_var} and \eqref{eq:var_bias_control}-\eqref{eq:var_var_control}.
\end{proof}

\begin{proof}[Proof of Theorem \ref{thm:coverage}]\label{proof:coverage}
The coverage equals
$$
P_{f_0}(f_0(x) \in C_m^\delta) = P_{f_0}( |\nu_m(x) - f_0(x)| \leq z_{1-\delta}\sigma_m(x)) = P_{f_0}(|V_m| \leq z_{1-\delta}),
$$
for $V_m \sim \mathcal{N}\left(\frac{b_m(x)}{\sigma_m(x)}, \frac{t_m^2(x)}{\sigma_m^2(x)} \right)$. Now, since $m \gg n^{\frac{1}{1+2\gamma}(\frac{2+\alpha}{1+\alpha})}$ we have $b_m(x) = b_n(x)(1+o(1))$, $t_m^2(x) = t_n^2(x)(1+o(1))$ and $\sigma_m^2(x) = \sigma_n^2(x)(1+o(1))$ and thus,
$$
\lim_{n\rightarrow \infty} P_{f_0}\left( |V_m| \leq z_{1-\delta} \right) = \lim_{n\rightarrow \infty} P_{f_0}\left( |V_n| \leq z_{1-\delta} \right) = \lim_{n\rightarrow \infty} P_{f_0} \left( f_0(x) \in C_n^\delta \right).
$$
\end{proof}

\begin{lemma}\label{lem:KL_div_form}
The Kullback-Leibler divergence between the SGPR $Q^*$ and the full posterior satisfies
$$
2KL(Q^* || \Pi(\cdot | Y) ) = \by^T \left(\frac{1}{\sigma^2
}\sum_{j = m + 1}^n \frac{\mu_j}{\mu_j + \sigma^2}\mathbf{v}_j \mathbf{v}_j^T \right) \by + \sum_{j = m+1}^n \log \frac{\sigma^2}{\sigma^2 + \mu_j} + \frac{
1}{\sigma^2}\sum_{j=m+1}^n\mu_j.
$$
Suppose further that $f_0 \in C^\alpha$, $\alpha \in (0,1]$, and consider  the fixed design setting with the rescaled Brown motion prior of regularity $\gamma \in (0,1]$. If $m \gg n^{\frac{1}{1+2\gamma}}$, then
    $$
    E_{f_0} KL(Q^* || \Pi(\cdot | Y) )  \asymp n^{\frac{2}{1+2\gamma}} m^{-1} + n^\frac{3+2\gamma}{1+2\gamma}m^{-1-2\alpha}.
    $$
    In particular, if $n^\frac{1}{1+2\gamma} \ll m \ll n^\frac{2}{1+2\gamma}$ then $E_{f_0} KL(Q^* || \Pi(\cdot | Y) ) \rightarrow \infty.$
\end{lemma}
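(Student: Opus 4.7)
My plan for the first identity is to invoke the variational characterization of the SGPR: since $Q^*$ is the KL-minimizer of Titsias' variational family $Q(f) = \int p(f|\mathbf{u}) Q(\mathbf{u})d\mathbf{u}$, one has $KL(Q^* \| \Pi(\cdot|Y)) = \log p(\mathbf{y}) - \mathcal{L}(Q^*)$, where the optimized ELBO equals $\mathcal{L}(Q^*) = \log \calN(\mathbf{y}|0,\sigma^2 I_n + \mathbf{Q}_{nn}) - \frac{1}{2\sigma^2}\mathrm{tr}(\mathbf{K}_{nn}-\mathbf{Q}_{nn})$, with Nystr\"om surrogate $\mathbf{Q}_{nn} = \mathbf{K}_{nm}\mathbf{K}_{mm}^{-1}\mathbf{K}_{mn}$. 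For eigenvector inducing variables one computes $\mathbf{Q}_{nn} = \sum_{j=1}^m \mu_j \mathbf{v}_j\mathbf{v}_j^T$ (so the Nystr\"om approximation is exact on the top-$m$ eigenspace), and the matrices $\sigma^2 I + \mathbf{K}_{nn}$ and $\sigma^2 I + \mathbf{Q}_{nn}$ are then simultaneously diagonal in the basis $\{\mathbf{v}_j\}$, with paired eigenvalues $(\sigma^2+\mu_j,\sigma^2+\mu_j)$ for $j\le m$ and $(\sigma^2+\mu_j,\sigma^2)$ for $j>m$. Expanding the quadratic forms and log-determinants of $\log p(\mathbf{y})-\mathcal{L}(Q^*)$ in this shared eigenbasis collapses everything into sums over $j>m$ and produces the displayed closed form term by term.

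For the asymptotic estimate the plan is to take $E_{f_0}[\cdot]$ of each of the three terms. The quadratic form expands as $E_{f_0}[\mathbf{y}^T A \mathbf{y}] = \mathbf{f}_0^T A \mathbf{f}_0 + \sigma^2 \mathrm{tr}(A)$ with $A=\sigma^{-2}\sum_{j>m}\tfrac{\mu_j}{\sigma^2+\mu_j}\mathbf{v}_j\mathbf{v}_j^T$. Under $m\gg n^{1/(1+2\gamma)}$, the estimate $\mu_j \asymp nc_n j^{-2}$ from Lemma \ref{lem:evals_and_evecs} forces $\mu_j = o(1)$ for $j>m$, so $\tfrac{\mu_j}{\sigma^2+\mu_j}\asymp \mu_j$. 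Combined with the coefficient bound $|\mathbf{v}_j^T \mathbf{f}_0|\lesssim \sqrt{n}j^{-\alpha}$ established inside the proof of Lemma \ref{prop:control_variational_remainders}, standard estimation of the resulting power-law tails $\sum_{j>m} j^{-s}\asymp m^{1-s}$ would yield
\begin{align*}
\mathrm{tr}(A) &\asymp \sum_{j>m} \mu_j \asymp nc_n m^{-1} = n^{2/(1+2\gamma)}m^{-1}, \\
\mathbf{f}_0^T A \mathbf{f}_0 &\lesssim \sum_{j>m}\mu_j(\mathbf{v}_j^T\mathbf{f}_0)^2 \asymp n^2 c_n m^{-1-2\alpha} = n^{(3+2\gamma)/(1+2\gamma)} m^{-1-2\alpha},
\end{align*}
using $nc_n = n^{2/(1+2\gamma)}$ and $n^2 c_n = n^{(3+2\gamma)/(1+2\gamma)}$.

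For the remaining log-determinant and trace pieces I would combine them as $\sum_{j>m}[\mu_j/\sigma^2 - \log(1+\mu_j/\sigma^2)]$ and Taylor-expand: since $\mu_j \to 0$ for $j>m$, each summand is $\tfrac{1}{2}(\mu_j/\sigma^2)^2(1+o(1))$, so the total is $\asymp (nc_n)^2\sum_{j>m}j^{-4}\asymp n^{4/(1+2\gamma)}m^{-3}$. The ratio of this to the trace contribution $n^{2/(1+2\gamma)}m^{-1}$ is of order $n^{2/(1+2\gamma)}m^{-2}$, which vanishes precisely under $m\gg n^{1/(1+2\gamma)}$; so this cancellation contribution is absorbed into the first summand, leaving $\asymp n^{2/(1+2\gamma)}m^{-1}+n^{(3+2\gamma)/(1+2\gamma)}m^{-1-2\alpha}$. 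The final divergence claim is then immediate: for $m\ll n^{2/(1+2\gamma)}$ the deterministic trace part alone drives $E_{f_0} KL\to\infty$, irrespective of $f_0$. The step I expect to be most delicate is verifying the Taylor cancellation cleanly -- specifically, ensuring $\mu_j/\sigma^2 - \log(1+\mu_j/\sigma^2)$ is uniformly controlled at order $\mu_j^2$ across all $j>m$ (which needs $\mu_j$ bounded, precisely the role of the threshold $m\gg n^{1/(1+2\gamma)}$), so that no spurious first-order residual appears to inflate the stated rate.
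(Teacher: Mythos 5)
Your proposal is correct and follows essentially the same route as the paper: the closed form comes from Titsias' expression for the KL gap (which you rederive via $\log p(\mathbf{y})-\mathcal{L}(Q^*)$ rather than citing it) together with the simultaneous diagonalization of $\sigma^2 I+\mathbf{K}_{nn}$ and $\sigma^2 I+\mathbf{Q}_{nn}$ in the eigenbasis, and the asymptotics use the same ingredients — $E_{f_0}[\mathbf{y}^TA\mathbf{y}]=\mathbf{f}_0^TA\mathbf{f}_0+\sigma^2\mathrm{tr}(A)$, the bounds $\mu_j\asymp nc_nj^{-2}$ and $|\mathbf{v}_j^T\mathbf{f}_0|\lesssim\sqrt{n}j^{-\alpha}$, and the Taylor cancellation of $\frac{1}{\sigma^2}\sum_{j>m}\mu_j$ against the log-determinant leaving an $O(n^{4/(1+2\gamma)}m^{-3})$ remainder that is negligible precisely when $m\gg n^{1/(1+2\gamma)}$. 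The delicate step you flag is exactly the one the paper handles the same way, so no changes are needed.
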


\begin{proof}
We have from \cite{T09}
$$
KL(Q || \Pi(\cdot | Y) ) = \frac{1}{2}\left(\by^T \left(\mathbf{Q}_{n}^{-1} - \mathbf{K}_{n}^{-1} \right) \by\right) + \log \frac{|\mathbf{Q}_{n}|}{|\mathbf{K}_{n}|} + \frac{1}{\sigma^2}tr(\mathbf{K}_{n} - \mathbf{Q}_{n}),
$$
where $\mathbf{K}_{n} = \mathbf{K}_{nn} + \sigma^2 I$, $\mathbf{Q}_{n} = Q_{nn} + \sigma^2 I$, $\mathbf{K}_{nn} = \sum_{j = 1}^n \mu_j \mathbf{v}_j \mathbf{v}_j^T$ and $Q_{nn} = \sum_{j = 1}^m \mu_j \mathbf{v}_j \mathbf{v}_j ^T$. We thus have
\begin{align*}
    \mathbf{Q}_{n}^{-1} - \mathbf{K}_{n}^{-1} &= \sum_{j=m+1}^n \left(\frac{1}{\sigma^2} - \frac{1}{\sigma^2 + \mu_j} \right)\mathbf{v}_j\mathbf{v}_j^T = \frac{1}{\sigma^2}\sum_{j=m+1}^n \frac{\mu_j}{\sigma^2 + \mu_j} \mathbf{v}_j\mathbf{v}_j^T,
    \end{align*}
$\frac{|\mathbf{Q}_{n}|}{|\mathbf{K}_{n}|} = \prod_{j = m+1}^{n} \frac{\sigma^2}{\sigma^2 + \mu_j},$
and 
$tr(\mathbf{K}_{n} - \mathbf{Q}_{n}) = \sum_{j =  m + 1}^n \mu_j$, giving the results. 
We now consider the three terms given in the first part of the lemma. Defining $M := \sum_{j = m + 1}^n \frac{\mu_j}{\mu_j + \sigma^2}\mathbf{v}_j \mathbf{v}_j^T$, we have
\begin{align*}
    E_{f_0}\left[ \by^T \left(\frac{1}{\sigma^2}\sum_{j = m + 1}^n \frac{\mu_j}{\mu_j + \sigma^2}\mathbf{v}_j \mathbf{v}_j^T \right) \by \right] &= \frac{1}{\sigma^2} E_{f_0}\left[(\boldf_0+ \boldsymbol{\eps})^T M(\boldf_0+ \boldsymbol{\eps}) \right] \\
    &= \frac{1}{\sigma^2} \boldf_0^T M \boldf_0 + \frac{1}{\sigma^2}E_{f_0} \left[\boldsymbol{\eps}^T M \boldsymbol{\eps} \right].
\end{align*}
Recalling from the proof of Lemma \ref{prop:control_variational_remainders} that $|\mathbf{v}_j^T\boldf_0| \lesssim \sqrt{n}j^{-\alpha}$, we can control
\begin{align*}
    \frac{1}{\sigma^2} \boldf_0^T M \boldf_0 = \frac{1}{\sigma^2} \sum_{j = m + 1}^n \frac{\mu_j}{\mu_j + \sigma^2} (\mathbf{v}_j^T\boldf_0)^2 &\lesssim\frac{1}{\sigma^2} \sum_{j=m+1}^n n \mu_j j^{-2\alpha} \asymp n^\frac{3+2\gamma}{1+2\gamma}m^{-1-2\alpha}.
\end{align*}
Next, we have
$$
E_{f_0}[\boldsymbol{\eps}^T M \boldsymbol{\eps}] = E_{f_0} tr(\boldsymbol{\eps}^T M \boldsymbol{\eps}) = E_{f_0} tr(M \boldsymbol{\eps}\boldsymbol{\eps}^T) = tr(M E_{f_0}[\boldsymbol{\eps}\boldsymbol{\eps}^T]) = tr(M) \asymp n^{\frac{2}{1+2\gamma}}m^{-1}.
$$
For the second term in the first part of the lemma,
\begin{align*}
\sum_{j = m+1}^n \log \frac{\sigma^2}{\sigma^2 + \mu_j} = -\sum_{j = m+1}^n \log\left(1+\frac{\mu_j}{\sigma^2} \right) &= -\sum_{j = m+1}^n \left(\frac{\mu_j}{\sigma^2} + O\left(\frac{\mu_j^2}{\sigma^4}\right) \right) \\
&= -\frac{1}{\sigma^2}\sum_{j = m+1}^n \mu_j - O\left(\sum_{j=m+1}^n \frac{\mu_j^2}{\sigma^4} \right) \\
&= -\frac{1}{\sigma^2}n^\frac{2}{1+2\gamma}/m - O(n^{\frac{4}{1+2\gamma}} m^{-3}).
\end{align*}
Hence, adding these expressions to the third term in the first part of the lemma yields
$$
E_{f_0}KL(Q || \Pi(\cdot | Y) ) \asymp n^\frac{2}{1+2\gamma}m^{-1} +n^\frac{3+2\gamma}{1+2\gamma}m^{-1-2\alpha} - O(n^\frac{4}{1+2\gamma}m^{-3}).
$$
The last term is of strictly smaller order than the first two if $m \gg n^{\frac{1}{1+2\gamma}}$.
\end{proof}

\section{Additional numerical simulations}\label{sec:additional_simulations}

We provide here additional simulations to those in Section \ref{sec:simulations}, investigating further the minimal number of inducing variables needed for good pointwise UQ and the sensitivity of the results to misspecification of the noise distribution. To illustrate the pointwise approach to UQ using full posterior inference and SGPRs, one can refer to Figure \ref{fig:posterior_comparison} in the paper.


\textbf{Threshold for the number of inducing variables.} We showed in Sections \ref{sec:results} and \ref{sec:simulations} that for $m\gg n^{\frac{1}{1+2\gamma}(\frac{2+\alpha}{1+\alpha})}$ inducing variables, the SGPR behaves similarly to the true posterior for pointwise inference. When $\alpha = \gamma$, the threshold for minimax convergence rates for \textit{estimation} in $L^2$ is the smaller bound $m \gg n^{\frac{1}{1+2\alpha}}$ \cite{NSZ22a}. We next investigate how the SGPR behaves near this threshold.

Return to the fixed ($X_i = i/(n+1/2)$) and uniform random ($X_i \sim^{iid} U[0,1]$) design settings on $[0,1]$  and consider the same Gaussian processes as before: rescaled Brownian motion, Matérn and rescaled square exponential. We consider three different values of $m$ given by $m_-^* = n^{\frac{1}{1+2\gamma}}/\log n$, $m_+^* = n^{\frac{1}{1+2\gamma}}\cdot\log n$, so that these values of $m$ are just below and above the threshold $n^{\frac{1}{1+2\gamma}}$, respectively, and the full posterior case $m = n$. We consider the $1-$Hölder function $f_0(x) = |x - 1/2|$ and $2-$Hölder function $f_0(x) = \textrm{sign}(x-1/2)|x-1/2|^2$.

\begin{table}[ht]
\centering
\resizebox{\columnwidth}{!}{
\begin{tabular}{l|ccc|ccc|ccc|ccc}
  & \multicolumn{3}{c}{\bf Coverage} & \multicolumn{3}{c}{\bf Length}   & \multicolumn{3}{c}{\bf RMSE} & \multicolumn{3}{c}{\bf NLPD} \\\hline
GP   & $m^*_-$ & $m^*_+$ & $n$ & $m^*_-$ & $m^*_+$ & $n$ & $m^*_-$ & $m^*_+$ & $n$ & $m^*_-$ & $m^*_+$ & $n$\\  \hline
&\multicolumn{12}{l}{{\bf Fixed Design: } $\boldsymbol{n = 1000, (\alpha, \gamma) = (1, 0.5)}$}    \\ \hline
 rBM    & 1.00 & 0.98 & 0.98 & 0.52                 & 0.42                 & 0.42                 & 0.06 & 0.09 & 0.09 & -0.85  & -0.89  & -0.89  \\
 Matérn & 1.00 & 0.98 & 0.98 & 0.69                 & 0.50                 & 0.50                 & 0.07 & 0.11 & 0.11 & -0.59  & -0.70  & -0.70  \\
 SE     & 1.00 & 0.93 & 0.93 & 2.73                 & 0.66                 & 0.66                 & 0.08 & 0.18 & 0.18 & 0.74   & -0.28  & -0.28  \\ \hline
&\multicolumn{12}{l}{{\bf Fixed Design: } $\boldsymbol{n = 1000, (\alpha, \gamma) = (2, 1.5)}$}    \\ \hline
 rBM    & 0.98 & 0.98 & 0.98 & 0.18                 & 0.17                 & 0.17                 & 0.04 & 0.04 & 0.04 & -1.72  & -1.74  & -1.74   \\
 Matérn & 1.00 & 0.95 & 0.95 & 0.56                 & 0.22                 & 0.22                 & 0.04 & 0.05 & 0.05 & -0.82  & -1.51  & -1.51   \\
 SE     & 1.00 & 0.91 & 0.91 & 2.29                 & 0.31                 & 0.31                 & 0.05 & 0.09 & 0.09 & 0.56   & -0.99  & -0.99  \\
  \hline
&\multicolumn{12}{l}{{\bf Random Design: }{$\boldsymbol{n = 500, (\alpha, \gamma) = (1, 0.5)}$} }\\ \hline
 rBM    & 1.00 & 0.97 & 0.97 & 0.59  & 0.50  & 0.50  & 0.08 & 0.10 & 0.10 & -0.70  & -0.75  & -0.75  \\
 Matérn & 1.00 & 0.96 & 0.96 & 0.87  & 0.59  & 0.59  & 0.07 & 0.12 & 0.12 & -0.38  & -0.58  & -0.58  \\
 SE     & 1.00 & 0.94 & 0.94 & 2.70  & 0.77  & 0.77  & 0.10 & 0.21 & 0.21 & 0.73   & -0.13  & -0.13  \\ \hline
&\multicolumn{12}{l}{{\bf Random Design: }{$\boldsymbol{n = 500, (\alpha, \gamma) = (2, 1.5)}$} }\\ \hline
 rBM    & 0.98 & 0.95 & 0.95 & 0.23  & 0.23  & 0.23  & 0.05 & 0.05 & 0.05 & -1.49  & -1.49  & -1.49  \\
 Matérn & 1.00 & 0.96 & 0.96 & 0.57  & 0.29  & 0.29  & 0.05 & 0.07 & 0.07 & -0.79  & -1.19  & -1.19  \\
 SE     & 1.00 & 0.95 & 0.95 & 2.10  & 0.39  & 0.39  & 0.05 & 0.10 & 0.10 & 0.47   & -0.86  & -0.86   \\
  \hline
\end{tabular}
}
\vspace{2mm}\\
\caption{
Comparison of SGPR and full posterior for 90\% pointwise credible intervals for different values of $(n,\alpha, \gamma)$ in dimension $d=1$. The column headers describe how many inducing variables were used, with $m^*_- := n^{\frac{1}{1+2\gamma}}/\log(n)$ and $m^*_+ := n^{\frac{1}{1+2\gamma}}\cdot\log(n)$ and the full posterior represented by $n$. The hypothesised bound $n^{\frac{1}{1+2\gamma}}$ equals 32, 6, 22 and 5 in the four different regimes (top-to-bottom).}
\label{Tab:cov_all_gps_additional}
\end{table}

The results of our experiments for different values of $(n,\alpha,\gamma)$ are given in Table \ref{Tab:cov_all_gps_additional}; we note here that we exclude estimates of the standard error in this Table due to space constraints, but they are comparable to those presented in Table \ref{Tab:cov_all_gps} in the main text. 
In all cases, one can see that the coverage, length and RMSE are practically indistinguishable between the cases $m = m_+^*$ and the full posterior $m = n$, suggesting that the choice of $m \gg n^{\frac{1}{1+2\gamma}}$ inducing variables is sufficient to achieve the same performance as the full posterior for pointwise inference. Furthermore, for fixed design, we again see the Brownian motion with both choices $m = m_+^*$ and $m = n$ achieves the expected coverage of 0.98 predicted by our theory.

Conversely, for $m=m_-^*$ inducing variables, the coverage and credible interval length was larger in almost all cases, sometimes significantly so. This suggests that below this threshold, the SGPR is no longer a good approximation for the true posterior, matching the findings in \cite{NSZ22a,NSZ22b} for $L^2$-type inference. On the other hand, the RMSE is usually reduced, indicating that additional smoothing helps the sparse posterior mean for estimation, which is not surprising since in our simulation choices the prior undersmooths the truth. The main message is that taking fewer inducing points than this threshold leads to a significant increase in the credible interval width, making the UQ less informative and overly conservative. It is interesting that this threshold shows so clearly, since $m_-^*$ and $m_+^*$ differ only by a subpolynomial factor.

The same story holds for random design, with some additional fluctuations due to this additional source of randomness. We recall that for fixed design, the credible intervals have deterministic length, whereas for random design these are also now random.

It would be interesting to extend our theoretical results to the threshold $n^\frac{1}{1+2\gamma}$ as suggested by these simulations. However, the mismatch between pointwise loss and the $L^2$-distance induced by the likelihood makes using standard tools from Bayesian nonparametrics difficult, see \cite{HRS15} for further discussion.



\textbf{Multidimensional setting.}
Consider again $d=10$ dimensional covariates with the Matérn kernel of smoothness 1/2, and the corresponding threshold $n^{\frac{d}{d+2\gamma}}$ needed for the minimax contraction rate in $L^2$ \cite{NSZ22a}. We take uniform random design $X_{i} \sim^{iid} \mathcal{U}([-1/2,1/2]^d)$, true function $f_0(x) =\|x\|^\alpha$ for $\alpha = 0.9$ and investigate the pointwise $90\%-$credible intervals at the point $x_0=(0,\dots,0)$. We now take three values of $m$ given by $m_{d-}^* = n^{\frac{d}{d+2\gamma}} / \log n$, $m_{d+}^* = n^{\frac{d}{d+2\gamma}}\cdot \log n$ and $m = n$.   The results are presented in Table \ref{Tab:mat_d2_additional}. Indeed, one sees that the variational posterior performs identically to the full posterior once at least $m_{d+}^* = n^{d/(d+2\gamma)}\cdot \log n \gg n^{\frac{d}{d+2\gamma}}$ inducing variables have been used, while $m_{d-}^*$ inducing variables do not seem to be enough to get the same performance. 
 

\begin{table}[ht]
\centering
\begin{tabular}{ccc|ccc|ccc | ccc}
\hline
 \multicolumn{3}{c}{\bf Coverage} & \multicolumn{3}{c}{\bf Length} & \multicolumn{3}{c}{\bf RMSE} & \multicolumn{3}{c}{\bf NLPD}\\  \hline
  $m_{d-}^*$ & $m_{d+}^*$ & $n$ & $m_{d-}^*$ & $m_{d+}^*$ & $n$ & $m_{d-}^*$ & $m_{d+}^*$ & $n$ & $m_{d-}^*$ & $m_{d+}^*$ & $n$\\ 
  \hline
 0.93 & 1.00 & 1.00 & 2.59  & 2.40   & 2.40  & 0.91 & 0.73 & 0.73 & 1.35  & 1.10  & 1.10 \\
   \hline
\end{tabular}
\vspace{2mm}\\
\caption{
Comparison of SGPR and full posterior with Mat\'ern prior for 90\% pointwise credible intervals for $(d,\alpha, \gamma) = (10,0.9,0.5)$.
The column headers describe how many inducing variables were used with $m_{d-}^* = n^{\frac{d}{d+2\gamma}} / \log(n)$ and $m_{d+}^* = n^{\frac{d}{d+2\gamma}}\cdot \log(n)$ and the full posterior represented by $n$. Here, $n^{\frac{d}{d+2\gamma}}=284$.}
\label{Tab:mat_d2_additional}
\end{table}

\textbf{Misspecified noise.}
We now take the same fixed design setting as in Section \ref{sec:results} of the main paper, but sample $Y_i = f_0(X_i) + \eps_i$, where $\eps_i$ are independent standard Laplace random variables. Based on Table \ref{Tab:cov_laplace_noise}, the results are much the same as in the well-specified case, so that the credible sets demonstrate some robustness to noise misspecification. Note that while the coverage remains broadly similar, the credible intervals are slightly wider and have larger RMSE reflecting the larger noise. 
\begin{table}[ht]
\centering
\resizebox{\columnwidth}{!}{
\begin{tabular}{l|cc|cc|cc|cc|cc}
  & \multicolumn{2}{c}{\bf Smoothness}& \multicolumn{2}{c}{\bf Coverage} & \multicolumn{2}{c}{\bf Length} & \multicolumn{2}{c}{\bf RMSE} & \multicolumn{2}{c}{\bf NLPD}\\\hline
GP  & $\alpha$ & $\gamma$ & SGPR & GP & SGPR & GP & SGPR & GP & SGPR & GP\\  \hline
 rBM & 1 & 0.5 & 0.98 & 0.98 & 0.49 & 0.49 & 0.11 & 0.11 & -0.71 {\small (0.15)}  & -0.71 {\small (0.15)}  \\ 
  Matérn & 1 & 0.5 & 0.98 & 0.98 & 0.58 & 0.58 & 0.12 & 0.12 & -0.58 {\small (0.17)}  & -0.58 {\small (0.17)}  \\ \hline 
  rBM & 0.5 & 0.5 & 0.69 & 0.69 & 0.49 & 0.49 & 0.22 & 0.22 & 0.11 {\small (0.32)}  & 0.11 {\small (0.32)}  \\ 
  Matérn & 0.5 & 0.5 & 0.88 & 0.88 & 0.58 & 0.58 & 0.20 & 0.20 & -0.17 {\small (0.20)}  & -0.17 {\small (0.20)} \\
  \hline
\end{tabular}
}
\vspace{2mm}\\
\caption{Misspecified noise. 
SGPR summary statistics for 90\% pointwise credible intervals for different values of $(\alpha, \gamma)$ with regular fixed design on $[0,1]$ with $n = 1000$ and Laplace noise. The SGPR uses $m^* = n^{\frac{1}{1+2\gamma}\frac{2+\alpha}{1+\alpha}}$ inducing variables, which is 126 and 316 in the two regimes.}
\label{Tab:cov_laplace_noise}
\end{table}

%

\subsection{Definitions of simulation statistics}

For completeness, we define here the statistics reported in the simulation sections based on $M$ Monte-Carlo samples. For $Y_i^{(j)} = f_0(X_i) + \eps_i^{(j)}$, $i = 1, \dots, n$ and $j = 1,\dots, M$, let $C_m^{(j)}(x_0)$ denote the credible interval for $f(x_0)$, $\bar{f}_m^{(j)}(x_0)$ and $\sigma_m^{(j)}(x_0)^2$ the posterior mean and variance from the SGPR with $m$ inducing variables computed from the $j^{th}$ realisation of the data. We then consider the Monte-Carlo estimates of the \textit{coverage}, credible interval \textit{length}, \textit{root-mean square error (RMSE)} and \textit{negative log-predictive density} (NLPD) given by
\begin{align*}
	\widehat{\textrm{Cov}} &= \frac{1}{M}\sum_{j = 1}^M \mathbf{1}\{f_0(x_0) \in C_m^{(j)}(x_0) \},\hspace{5mm} \widehat{RMSE} = \sqrt{\frac{1}{M} \sum_{j = 1}^M (\bar{f}_m^{(j)}(x_0) - f_0(x_0) )^2},
  \\
\widehat{\textrm{Len}} &= \frac{1}{M}\sum_{j = 1}^M\textrm{Length}\left( C_m^{(j)}(x_0) \right), \hspace{3mm} \widehat{NLPD} = -\frac{1}{M} \sum_{j = 1}^M \log \mathcal{N}\left(f_0(x_0) | f_m^{(j)}(x_0), \sigma_m^{(j)}(x_0)^2 \right),
\end{align*}
respectively, where $\mathcal{N}\left(f_0(x_0) | f_m^{(j)}(x_0), \sigma_m^{(j)}(x_0)^2 \right)$ denotes the density of the Gaussian distribution with mean $f_m^{(j)}(x_0)$ and variance $\sigma_m^{(j)}(x_0)^2$ evaluated at the point $f_0(x_0)$.


\end{document}